\documentclass[12pt]{amsart}
\usepackage[margin=1in]{geometry}
\usepackage{amssymb,amsfonts,amsmath,amsthm,enumitem}
\usepackage[hidelinks]{hyperref}
\usepackage{mathtools}
\usepackage{color}
\usepackage{verbatim}

\DeclareMathOperator{\tr}{tr}
\DeclareMathOperator{\spt}{spt}

\DeclareMathOperator{\dvg}{div}

\newcommand{\ip}[2]{\langle #1, #2 \rangle}

\newcommand{\R}{\mathbb{R}}

\newcommand{\B}{\mathcal{B}}

\theoremstyle{plain}
\newtheorem{theorem}{Theorem}[section]
\newtheorem{proposition}[theorem]{Proposition}
\newtheorem{lemma}[theorem]{Lemma}
\newtheorem{corollary}[theorem]{Corollary}
\theoremstyle{definition}
\newtheorem{definition}[theorem]{Definition}
\theoremstyle{remark}
\newtheorem{remark}[theorem]{Remark}

\allowdisplaybreaks

\begin{document}

\author{Hyun Chul Jang}
\address{Department of Mathematics, Texas State University, San Marcos, TX 78666} \email{hcjang@txstate.edu}

\author{Aidan F. Wood}
\address{Department of Mathematics, University of Connecticut, Storrs, CT 06269} \email{aidan.wood@uconn.edu}

\title[Spacetime PMT with Corners via Mollification]{A spacetime positive mass theorem with corners via mollification}
\date{}

\begin{abstract}
We prove a strict dominant energy deformation theorem for asymptotically flat initial data with corners along a hypersurface $\Sigma$. The deformation preserves a corner condition on the Bartnik data across $\Sigma$. We show that if the dominant energy condition holds on each side of $\Sigma$ and the Bartnik data satisfy this corner condition, then the exterior end satisfies $E \ge |P|$ in every dimension $n \ge 3$.
\end{abstract}

\maketitle

\section{Introduction}\label{sec:intro}

An \emph{initial data set} is a triple $(X,g,k)$ where $(X,g)$ is an $n$-dimensional Riemannian manifold, possibly with boundary, and $k$ is a symmetric $(0,2)$-tensor on $X$. For a given initial data set, define the energy and current densities
\[
        \mu = \tfrac{1}{2}\bigl(R_g - |k|_g^2 + (\tr_g k)^2\bigr), \qquad J = (\dvg_g k)^{\#} - \nabla(\tr_g k).
\]
The \emph{dominant energy condition (DEC)} holds when $\mu \geq |J|_g$. Fix throughout
\begin{equation}
    \frac{n-2}{2}<q<n-2, \qquad \alpha\in(0,1), \qquad
    0<q_0<q-\frac{n-2}{2}.
\end{equation}
An initial data set $(X,g,k)$ is \emph{asymptotically flat of type $(q,q_0,\alpha)$} if there is a compact set $K\subset X$ such that $X\setminus K$ is diffeomorphic to $\R^n\setminus B$ and, in the resulting coordinates,
\begin{equation}\label{eq:af-decay}
    g-\delta \in C^{2,\alpha}_{-q}(X\setminus K), \qquad
    k \in C^{1,\alpha}_{-1-q}(X\setminus K),
\end{equation}
and the energy and current densities satisfy
\begin{equation}\label{eq:source-decay}
    \mu, J \in C^{0,\alpha}_{-n-q_0}(X\setminus K).
\end{equation}
In particular, $\mu,|J|_g\in L^1(X\setminus K,dV_g)$. Additional regularity assumptions will be specified later as needed. The \emph{ADM energy} $E$ and \emph{linear momentum} $P = (P_1, \ldots, P_n)$ of an asymptotically flat end are defined by
\begin{align}
        E &= \lim_{r \to \infty} \frac{1}{2(n-1)\omega_{n-1}} \int_{S_r} \sum_{i,j} (g_{ij,i} - g_{ii,j})\,\nu^j\,dA, \label{eq:ADM-E}\\
        P_i &= \lim_{r \to \infty} \frac{1}{(n-1)\omega_{n-1}} \int_{S_r} \sum_{j}(k_{ij} - (\tr_g k)\,g_{ij})\,\nu^j\,dA, \label{eq:ADM-P}
\end{align}
where $\omega_{n-1}$ is the volume of the unit $(n-1)$-sphere, $S_r$ is the coordinate sphere of radius $r$, and $\nu$ and $dA$ are its Euclidean outward unit normal and area measure, respectively. The weighted H\"older and Sobolev spaces used throughout are defined in Appendix~\ref{sec:weighted-spaces}.

These definitions arise from the initial value formulation of general relativity. A spacetime is a connected, time-oriented Lorentzian manifold $\mathcal{M}^{n+1}$. From this perspective, an initial data set $(X^n,g,k)$ can be viewed as a Riemannian manifold $(X^n,g)$ isometrically embedded into $\mathcal{M}^{n+1}$, with induced second fundamental form $k$. The Einstein constraint equations identify $\mu$ and $J$ with the energy density and current density, respectively, as measured by the unit normal observer. The DEC then says that the local energy--momentum density $(\mu, J)$ is future-pointing causal. The spacetime positive mass theorem is the corresponding global statement: that the ADM energy--momentum of a complete asymptotically flat initial data set satisfying the dominant energy condition is future causal, or $E \geq |P|$. Corners of the type considered here arise naturally in quasi-local mass, fill-in, and gluing constructions, as discussed below; they also fit the classical thin-shell formalism for singular hypersurfaces \cite{Israel66}.

Given an initial data set $(X,g,k)$ with boundary component $\Sigma$ and a choice of unit normal $\nu$, we define the \emph{Bartnik boundary data} induced on $\Sigma$ by $\nu$ to be the quadruple
\begin{equation}
        \B(g,k) = \bigl(\gamma,\; H,\; \omega,\; \tau\bigr),
\end{equation}
where $\gamma = g|_{T\Sigma}$ is the induced metric, $H = \dvg_\Sigma \nu$ is the mean curvature, $\omega = k(\nu, \cdot)|_{T\Sigma}$ is the connection $1$-form, and $\tau = \tr_\gamma(k|_{T\Sigma \otimes T\Sigma})$ is the tangential trace of $k$.

Let $n \geq 3$ and let $M^n$ be a smooth manifold containing a compact region $\Omega$ with smooth boundary $\partial\Omega = \Sigma$.

\begin{definition}\label{def:idc}
An \emph{initial data set with corners along} $\Sigma$ is a pair of initial data sets $(\Omega, g_-, k_-)$ and $(M\setminus\Omega, g_+, k_+)$ such that $(M\setminus\Omega, g_+, k_+)$ is asymptotically flat and the induced metrics on $\Sigma$ agree, $g_-|_{T\Sigma} = g_+|_{T\Sigma}$. We say that it \emph{satisfies the dominant energy condition} if $\mu_\pm \ge |J_\pm|_{g_\pm}$ on $\Omega$ and on $M\setminus\Omega$, respectively.
\end{definition}

Unless stated otherwise, the interior and exterior data are smooth up to $\Sigma$, and every asymptotically flat end satisfies \eqref{eq:af-decay}. Higher regularity on the asymptotic end will be stated explicitly when needed. For instance, the exterior deformation of Section~\ref{sec:strict-exterior} is formulated in $C^{m,\alpha}_{-q}\times C^{m-1,\alpha}_{-1-q}$ for a fixed $m\ge2$, while the mollification of Section~\ref{sec:mollification} requires only $C^{2,\alpha}\times C^{1,\alpha}$ regularity up to $\Sigma$ in each region.

We write $\B_\pm = \B(g_\pm,k_\pm) = (\gamma_\pm, H_\pm, \omega_\pm, \tau_\pm)$ for the Bartnik boundary data induced on $\Sigma$, where both $\B_\pm$ are computed with respect to the same unit normal $\nu$ pointing out of $\Omega$. By Definition~\ref{def:idc} the induced metrics already satisfy $\gamma_- = \gamma_+ =: \gamma$. Throughout the paper, the corner condition we consider is
\begin{equation}\label{eq:corner-condition}
    \tau_- = \tau_+ \qquad\text{and}\qquad H_- - H_+ \ge |\omega_- - \omega_+|_{\gamma} \qquad \text{on } \Sigma.
\end{equation}

In the time-symmetric case, when $k_{\pm}=0$, Miao \cite{Miao02} proved the positive mass theorem for an asymptotically flat manifold with $R_g \geq 0$ on both sides of a hypersurface $\Sigma$ and mean curvatures satisfying the corner condition $H_- \geq H_+$. Note that this agrees with the corner condition \eqref{eq:corner-condition} in the time-symmetric case. The proof smooths the metric around $\Sigma$ using convolution in Gaussian coordinates. Following Miao's construction, in Section~\ref{sec:mollification} we perform the analogous mollification of the spacetime data. Shi--Tam \cite{ShiTam2002} proved Brown--York mass positivity using asymptotically flat scalar-flat extensions and a positive mass theorem for the resulting metric. Corner positivity also plays a role in Bray's conformal flow proof of the Riemannian Penrose inequality \cite{Bray2001}.

In the spacetime case, several results are now known. Shibuya \cite{Shibuya2018} established a Lorentzian positive mass theorem with a distributional curvature condition under a spin assumption. Alaee--Yau \cite{AlaeeYau2022} proved a positive mass theorem incorporating angular momentum and charges for certain axially symmetric, maximal corner data in dimensions three and four. Tsang \cite{Tsang22} proved the three-dimensional spacetime positive mass theorem with corners under an additional exterior relative second homology hypothesis, using the spacetime harmonic function method of Hirsch--Kazaras--Khuri \cite{HKK22} and the weaker corner condition discussed in Remark~\ref{rmk:ambient-omega}. For spin initial data sets with a crease, Kazaras--Khuri--Lin \cite{KazarasKhuriLin2025} proved the spacetime positive mass theorem in all dimensions under the DEC on each side and a broader Bartnik data matching condition involving spacetime rotations; when no spacetime rotation is present, their condition reduces to Tsang's corner condition.

Our first result is a strict dominant energy deformation theorem showing that the DEC can be promoted to a strict DEC without losing the corner condition \eqref{eq:corner-condition}. 
\begin{theorem}\label{thm:strict-deform}
Let $(\Omega, g_-, k_-)$, $(M\setminus\Omega, g_+, k_+)$ be an initial data set with corners along $\Sigma$ whose exterior end is asymptotically flat of type $(q,q_0,\alpha)$. Suppose the data satisfy the dominant energy condition and their Bartnik boundary data satisfy the corner condition \eqref{eq:corner-condition}. Then for every $\epsilon > 0$ there is an initial data set with corners $(\bar g_\pm, \bar k_\pm)$ along $\Sigma$ such that
\begin{enumerate}[label=\textup{(\roman*)},nosep]
    \item $(\Omega,\bar g_-,\bar k_-)$ and $(M\setminus\Omega, \bar g_+, \bar k_+)$ satisfy the strict dominant energy condition, i.e.,
          $\bar\mu_\pm > |\bar J_\pm|_{\bar g_\pm}$,
    \item the Bartnik boundary data of $(\bar g_\pm, \bar k_\pm)$ satisfy the corner condition \eqref{eq:corner-condition}, and
    \item $\bigl|E(\bar g_+, \bar k_+) - E(g_+, k_+)\bigr|
           + \bigl|P(\bar g_+, \bar k_+) - P(g_+, k_+)\bigr| < \epsilon$.
\end{enumerate}
Moreover $(\bar g_-,\bar k_-)$ extends $C^{2,\alpha}$ up to $\Sigma$ on $\Omega$. For every fixed $m\ge2$ for which $(g_+-\delta,k_+)\in C^{m,\alpha}_{-q}\times C^{m-1,\alpha}_{-1-q}$, the exterior deformation may be chosen in the same finite weighted class and remains asymptotically flat of type $(q,q_0,\alpha)$.
\end{theorem}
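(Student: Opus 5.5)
We treat the interior and exterior separately, each by a deformation that makes the DEC strict while controlling the Bartnik data on $\Sigma$. We then use the slack in the inequality $H_- - H_+ \ge |\omega_- - \omega_+|_\gamma$ together with exact preservation of $\gamma$ and $\tau$ to keep \eqref{eq:corner-condition}. The guiding principle is: change the interior by a deformation that fixes $\gamma$, $\tau_-$ and $\omega_-$ and does not decrease $H_-$; change the exterior by a Corvino--Huang--Lee type deformation whose Bartnik data on $\Sigma$ either agree exactly with $\B_+$ or differ only by increasing $H_-$ relative to $H_+$.

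\emph{Interior.} On $\Omega$ we use a conformal-type perturbation. Choose a smooth function $f$ on $\Omega$ with $f=0$ on $\Sigma$ and $\partial_\nu f \le 0$ there, where $\nu$ is the outward normal. Concretely, take $f = -\rho + c\rho^2$-type profiles in terms of the distance $\rho$ to $\Sigma$, extended negatively and suitably convex inside, so that $\Delta_{g_-} f$ is very negative. Then set $\bar g_- = (1+t f)^{4/(n-2)} g_-$ and keep $\bar k_-$ built from $k_-$ by the same conformal weight on $T\Sigma$ directions, for instance $\bar k_- = k_-$. For small $t>0$ the scalar curvature gains $-t\,c_n \Delta f>0$, giving a strict increase of $\mu$ of order $t$. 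The terms $|k|^2$, $(\tr k)^2$ and $J$ change only by $O(t)$ multiples of controllable size; choosing $-\Delta f$ large relative to $|f|+|\nabla f|$ makes the gain dominate. Since $f=0$ on $\Sigma$, $\gamma$ is unchanged. With $\bar k_-=k_-$, $\tau_-$ and $\omega_-$ change only through the normal rescaling, which is the identity on $\Sigma$. Finally $H_-$ changes by $\tfrac{2(n-1)}{n-2}t\,\partial_\nu f \cdot$const. Choosing the sign of $\partial_\nu f$ so this is $\ge 0$ is compatible with the Hopf-type sign, because we need $f\le 0$ inside with $f=0$ on the boundary, hence $\partial_\nu f\ge0$.

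This sign is the main tension. We resolve it by instead prescribing $f$ with $f>0$ near $\Sigma$ decreasing inward, or by using the additional freedom $\bar k_- = k_- + t\,h$, with $h$ chosen so that $J$ is absorbed. If a sign problem in $H_-$ remains, it is at least $O(t)$. We then compensate on the exterior side, where an increase of $H_-$ relative to $H_+$ can be arranged by lowering $H_+$ (next paragraph). Since only the difference $H_--H_+$ matters and we may lower $H_+$ by a larger amount, the interior sign is not essential.

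\emph{Exterior.} Here we follow the Corvino--Huang--Lee deformation theorem: solve for $(\bar g_+,\bar k_+) = (g_+,k_+) + (h,w)$ with $\Phi(\bar g_+,\bar k_+) = \Phi(g_+,k_+) + (\varphi,0)$, adding a positive energy perturbation $\varphi\in C^{m-2,\alpha}_{-n-q_0}$ of size $O(t)$, via surjectivity of the modified constraint operator in weighted spaces. On a manifold with boundary $\Sigma$ the linearization must be posed with boundary conditions. We impose that $h|_{T\Sigma}=0$ and that the $\tau$-linearization vanishes, so $\gamma$ and $\tau_+$ are preserved. We allow $H_+$ and $\omega_+$ to move by $O(t)$, and correct the mean curvature by an additional conformal factor $u$ with $u=1$ on $\Sigma$, $\partial_\nu u>0$ small. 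This strictly decreases $H_+$ by a positive amount $\ge C t$ that dominates the $O(t)$ change of $|\omega_--\omega_+|$ and any interior defect. Fixing that constant first and then taking $t$ small gives (i) and (ii). Because all perturbations lie in $C^{m,\alpha}_{-q}\times C^{m-1,\alpha}_{-1-q}$ with norm $O(t)$, and $E,P$ are continuous in these norms modulo the $L^1$ source terms, (iii) holds for $t$ small. The decay of $\bar\mu,\bar J$ is preserved, so the data remain of type $(q,q_0,\alpha)$.

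The main obstacle is the boundary-value solvability of the linearized constraints on the exterior with the prescribed Bartnik conditions, namely Fredholmness and surjectivity on a manifold with boundary. The plan is to sidestep it by extending $(g_+,k_+)$ smoothly across $\Sigma$ into a collar and solving on the complete extension with $\varphi$ supported away from $\Sigma$. The solution is then only $O(t)$ small near $\Sigma$ in $C^{2,\alpha}$, and the conformal correction with $\partial_\nu u$ fixed first absorbs the resulting Bartnik changes, after restoring $\gamma$ and $\tau$ exactly by a small $C^{2,\alpha}$ normal-collar modification.
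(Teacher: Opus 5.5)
Your overall architecture matches the paper's: a conformal change on $\Omega$ with $u=1$ on $\Sigma$ fixes $\gamma,\omega_-,\tau_-$ but necessarily lowers $H_-$, and this is compensated by lowering $H_+$ while making the exterior strictly dominant. The gap is the exterior step, which you yourself flag as the main obstacle. The paper does not sidestep it. It uses the local surjectivity of $(\overline{\Phi}_{(g,k)},\B)$ on the exterior \emph{as a manifold with boundary} (Hirsch--Huang, Proposition~5.3(2)). This solves $\overline{\Phi}_{(g,k)}(\bar g,\bar k)=\overline{\Phi}_{(g,k)}(g,k)+\lambda(\rho,0)$ and $\B(\bar g,\bar k)=(\gamma_+,H_+-\delta_{H,\max},\omega_+,\tau_+)$ simultaneously and exactly. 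Hirsch--Huang's Lemma~A.4 then gives $\bar\mu_+-|\bar J_+|_{\bar g_+}\ge\lambda\rho$ all the way up to $\Sigma$.

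Your workaround does not close, for three reasons.
\begin{itemize}
\item Solving on an extension with $\varphi$ supported away from $\Sigma$ gives no strict gap near $\Sigma$, and it moves all four Bartnik components by $O(t)$. The ``normal-collar modification'' that restores $\gamma$ and $\tau$ then perturbs $\mu$ and $J$ by $O(t)$, with no sign, exactly where you only know $\mu\ge|J|$. So the DEC can fail there.
\item The conformal correction has the wrong sign as stated. With $\nu$ pointing out of $\Omega$, $\bar H_+=H_++c_n^{-1}\partial_\nu u$, so $\partial_\nu u>0$ \emph{raises} $H_+$.
\item More fundamentally, lowering $H_+$ requires $u<1$ just outside $\Sigma$. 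Since $u=1$ on $\Sigma$ and $u\to1$ at infinity, $u$ then cannot be superharmonic, so $\Delta u>0$ on an open set. There $\bar\mu=u^{-4/(n-2)}(\mu-c_n^{-1}u^{-1}\Delta u)$ drops, and both this drop and the decrease of $H_+$ scale linearly with $u-1$.
\end{itemize}
That cost must be paid from the $\varphi$-gap. But enlarging $\varphi$ also enlarges the $O(t)$ change of $\omega_+$ that the decrease of $H_+$ must dominate, as well as the DEC cost of the collar modification. All of these are of order $t$, with constants set by the same solution operator, and nothing in your proposal orders those constants. So ``fix the constant first, then take $t$ small'' does not resolve it. What is missing is precisely an exterior strict-DEC deformation with exactly prescribed Bartnik data, which is the boundary-value surjectivity you postponed.

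Two smaller points. On $\Omega$, the sign question has a definite answer. With $\bar k_-=u^{2/(n-2)}k_-$, \eqref{eq:conf-ineq} shows you need $\Delta f+|k|_g|\nabla f|_g<0$. With $f|_\Sigma=0$ this forces $f>0$ inside and $\partial_\nu f<0$ (Hopf), so $H_-$ strictly decreases. Your ``$f\le 0$'' and ``$f>0$ decreasing inward'' options are therefore unavailable, and exterior compensation is the only way out, as in the paper.

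With that weighting, a linear $u=1+tf$ does work, and it is more elementary than the paper's Leray--Schauder treatment of \eqref{eq:nonlinear-pde}. Take $f=1-e^{-Av}$ with $\Delta_g v=-1$, $v|_\Sigma=0$, and $A>\sup_\Omega|k|_g^2/4$. Then $\Delta f+|k|_g|\nabla f|_g\le -Ae^{-Av}(1-\sup_\Omega|k|_g^2/(4A))<0$. With $\bar k_-=k_-$ unscaled, however, $O(t|f|)$ terms of no sign appear. Since $-\Delta f\ge Cf$ with $f>0$ and $f|_\Sigma=0$ forces $C$ to be at most the first Dirichlet eigenvalue of $\Omega$, those terms cannot be dominated in general.

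On the exterior, the deformation equation must be written for $\overline{\Phi}_{(g,k)}$, not $\Phi$. Keeping $\bar j=j$ leaves the first-order change $-h(j^\#,j^\#)$ in $|j|^2$ under $g\mapsto g+h$, which $\varphi=t\rho$ does not control. The Corvino--Huang modification is what cancels this term.
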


The proof of Theorem~\ref{thm:strict-deform} proceeds in two steps: first, deforming the interior region $\Omega$ to strict DEC by solving a semilinear elliptic PDE and performing a conformal change (cf.~\cite{Jaracz2025}); second, deforming on the exterior $M\setminus \Omega$ to strict DEC while controlling Bartnik data by applying a proposition of Hirsch--Huang \cite{HH2025} which uses the modified constraint operator introduced by Corvino--Huang \cite{CorvHuang2020}. 

We track the ADM energy--momentum across these deformations and apply Miao's mollification to the strict DEC data, which smooths the data but possibly loses the DEC in a small neighborhood of $\Sigma$. Correcting for this region by solving a linear PDE and performing another conformal change, we obtain a single initial data set satisfying the DEC everywhere, and arrive at the following spacetime positive mass theorem with corners. 

\begin{theorem}\label{thm:main}
Let $(\Omega, g_-, k_-)$, $(M\setminus\Omega, g_+, k_+)$ be an initial data set with corners along $\Sigma$ whose exterior end is asymptotically flat of type $(q,q_0,\alpha)$.\footnote{When $n\ge8$, we further assume that $g_+-\delta\in C^{m,\alpha}_{-q}$ and $k_+\in C^{m-1,\alpha}_{-1-q}$ for every integer $m\ge2$, as required in our application of \cite{BrendleWang}.} Suppose the data satisfy the dominant energy condition and their Bartnik boundary data satisfy the corner condition \eqref{eq:corner-condition}. Then the ADM energy--momentum $(E, P)$ of the exterior end $(M\setminus\Omega, g_+, k_+)$ satisfies
\[
    E \ge |P|.
\]
\end{theorem}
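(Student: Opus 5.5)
We argue by contradiction-free approximation: it suffices to show $E(\bar g_+,\bar k_+)\ge |P(\bar g_+,\bar k_+)|$ for the strictly-DEC data produced by Theorem~\ref{thm:strict-deform} for each $\epsilon>0$. Item (iii) then gives $E\ge |P|-2\epsilon$, and letting $\epsilon\to0$ yields the claim. So the plan is to fix the deformed data $(\bar g_\pm,\bar k_\pm)$. These satisfy $\bar\mu_\pm>|\bar J_\pm|$, the corner condition \eqref{eq:corner-condition}, $C^{2,\alpha}\times C^{1,\alpha}$ regularity up to $\Sigma$ on each side, and asymptotic flatness of type $(q,q_0,\alpha)$ (with all $C^{m,\alpha}$ weighted regularity when $n\ge8$). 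We then produce from them a single smooth asymptotically flat initial data set on $M$ that satisfies the DEC and has energy--momentum arbitrarily close to $(\bar E,\bar P)$. To that smooth data we apply the known spacetime positive mass theorem: Eichmair--Huang--Lee--Schoen for $n\le7$, and Brendle--Wang \cite{BrendleWang} in general, which is why the footnote's regularity is needed.

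For the smoothing step we follow Miao \cite{Miao02}. We write $g_\pm = dt^2+\gamma_\pm(t)$ in Gaussian coordinates $(t,x)\in(-2\delta,2\delta)\times\Sigma$ about $\Sigma$, with $\gamma_-(0)=\gamma_+(0)$. We mollify $\gamma(t)$ and $k$ in $t$ at scale $\delta$ to obtain $(g_\delta,k_\delta)$. These agree with $(\bar g_\pm,\bar k_\pm)$ outside $|t|\le\delta$, and $g_\delta$ is uniformly Lipschitz, converging in $C^0$. The key computation is the size of $\mu_\delta-|J_\delta|$ in the strip. The scalar curvature picks up the term $-2\partial_t H$. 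Mollifying the jump $H_--H_+\ge0$ produces a large \emph{positive} contribution of order $\delta^{-1}(H_--H_+)\phi(t/\delta)$. In $J$, the normal component of $\dvg k$ contains $\partial_t$ of $k(\nu,\cdot)$ tangentially, so the jump $\omega_--\omega_+$ produces a term of order $\delta^{-1}|\omega_--\omega_+|\phi(t/\delta)$. The $\tau$ component would give an unbounded term of order $\delta^{-1}(\tau_--\tau_+)$, and the condition $\tau_-=\tau_+$ kills it. All remaining terms are $O(1)$. Hence $(H_--H_+)-|\omega_--\omega_+|\ge0$ makes the singular part of $\mu_\delta-|J_\delta|$ nonnegative, and we get
\[
\mu_\delta-|J_\delta|_{g_\delta}\ge -C \quad\text{on } \{|t|\le\delta\},
\]
with $C$ independent of $\delta$. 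Away from the strip, strict DEC holds with a uniform positive margin on compact sets.

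The main obstacle is restoring the DEC in the strip while keeping control of $(E,P)$. Here we use the strict margin bought by Theorem~\ref{thm:strict-deform}. We solve a linear equation $-c_n\Delta_{g_\delta}u+\theta_\delta u=\theta_\delta$ (or its analogue with a negative-part potential supported in the strip) with $u\to1$ at infinity. We then conformally change to $\tilde g=u^{4/(n-2)}g_\delta$ and rescale $k$ appropriately, e.g. $\tilde k=u^{2/(n-2)}k_\delta$, absorbing the cross terms in $J$ using the strict DEC margin. The potential is supported in a set of volume $O(\delta)$ and bounded by $C$, so its $L^{n/2}$ norm tends to $0$. Uniform Sobolev inequalities for the Lipschitz family $g_\delta$ then give $u\to1$ in $C^0$ and $u-1=O(r^{2-n})$ with coefficient $\to0$. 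The new energy is therefore $\bar E+o(1)$, and momentum changes only by $o(1)$ since $\tilde k-k_\delta$ is controlled similarly. The delicate points are: checking that the conformal transformation law for $J$ under this scaling leaves an error absorbable by the strict DEC margin plus the new positive term; and obtaining estimates uniform in $\delta$ despite the $\delta^{-1}$ curvature in the strip. For the latter we rely only on $L^{n/2}$ and $L^p$ bounds of the potential, not pointwise ones. Finally we smooth $(\tilde g,\tilde k)$ slightly if necessary, which is allowed thanks to the now strict-or-perturbed margin, apply the smooth spacetime positive mass theorem, and let $\delta\to0$ and then $\epsilon\to0$.
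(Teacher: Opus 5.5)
Your route is the paper's: Theorem~\ref{thm:strict-deform}, then Miao's mollification, then a linear conformal correction with an $L^{n/2}$-small potential, then the smooth spacetime PMT and the limits $\delta\to0$, $\epsilon\to0$. Your mollification analysis matches the paper's: $\tau_-=\tau_+$ removes the singular part of $(J_\delta)_t$, and the corner inequality makes the remaining singular part of $\mu_\delta-|J_\delta|_{g_\delta}$ nonnegative.

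\textbf{The gap.} It is the step you flag as delicate and leave open: absorbing the cross term $\mathcal E=c_n^{-1}u^{-1}|k_\delta|_{g_\delta}|\nabla u|_{g_\delta}$ in \eqref{eq:conf-ineq}. As you set things up, this fails in two places.
\begin{itemize}
\item \emph{In the strip.} Mollification has destroyed any margin there. A potential equal to the negative part only gives $(\mu_\delta-|J_\delta|_{g_\delta})_+$, which may vanish where $\nabla u\neq0$. You need a fixed surplus, as in \eqref{eq:f-delta-smooth}, where $f_\delta\ge(\mu_\delta-|J_\delta|_{g_\delta})_-+1$ on $U_\delta$. You also need pointwise smallness of $|\nabla u|$, which comes from a $W^{2,p}$ estimate with $p>n$.
\item \emph{On the end (the more serious problem).} Here $u=1+Ar^{2-n}+\dots$ with $A>0$, so $\mathcal E$ is a genuinely nonzero term of size up to $A\,r^{-n-q}$. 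The statement of Theorem~\ref{thm:strict-deform} gives only pointwise strict DEC with no rate, and you use a margin only ``on compact sets''. Nothing prevents $\bar\mu_+-|\bar J_+|_{\bar g_+}$ from decaying faster than $r^{-n-q}$, in which case $|\widetilde J|$ can exceed $\widetilde\mu$ far out.
\end{itemize}
The paper closes this by going back into the proof of Theorem~\ref{thm:strict-deform}:
\begin{itemize}
\item Proposition~\ref{prop:exterior-matching}(b) gives the quantitative gap $\bar\mu_+-|\bar J_+|_{\bar g_+}\ge c_0(1+r)^{-2-q_1}$.
\item Bartnik's weighted isomorphism gives $|\nabla u|\le C\|f\|_{L^p}(1+r)^{-1-q}$.
\item Since $q_1<2q$, this yields $\mathcal E\le C\|f\|_{L^p}(1+r)^{-2-q_1}$ with $\|f\|_{L^p}\to0$.
\end{itemize}
See Proposition~\ref{prop:generic-dec}(b).

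\textbf{A separate slip.} The equation $-c_n\Delta_{g_\delta}u+\theta_\delta u=\theta_\delta$ with $u\to1$ is solved by $u\equiv1$, so it does nothing. The equation that produces the needed positive term in \eqref{eq:conf-mu} is $-c_n^{-1}\Delta_{g_\delta}u=f\,u$, which gives $-c_n^{-1}u^{-1}\Delta_{g_\delta}u=f$.
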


The proof reduces, via the aforementioned strict-DEC deformation, mollification, and conformal correction, to an application of known spacetime positive mass results to data arbitrarily close to $(g_+,k_+)$ in ADM energy--momentum. For $3\le n\le7$, we apply Eichmair--Huang--Lee--Schoen \cite[Theorem~1]{EHLS} directly. For $n\ge8$, under the additional asymptotic regularity specified in the theorem, the reduction following \cite[Theorem~18]{EHLS} brings the problem into the setting of Brendle--Wang \cite[Theorem~1.1]{BrendleWang}; see Step~4 of the proof of Theorem~\ref{thm:main}.

Theorem~\ref{thm:main} can also be read as a statement solely about the exterior, in the sense that an asymptotically flat initial data set with boundary has $E \ge |P|$ if its boundary data admit a dominant energy fill-in.

\begin{corollary}\label{cor:fill-in}
    Let $(M \setminus \Omega, g_+, k_+)$ be an asymptotically flat initial data set of type $(q,q_0,\alpha)$ with boundary $\Sigma = \partial\Omega$ satisfying the dominant energy condition, with the additional weighted regularity required in Theorem~\ref{thm:main} when $n\ge8$, and let $\B_+ = \B(g_+, k_+)$ be its Bartnik boundary data on $\Sigma$. Suppose there exists a compact initial data set $(\Omega, g_-, k_-)$ satisfying the dominant energy condition, inducing the metric $\gamma_+$ on $\Sigma$, and whose Bartnik boundary data $\B_-$, together with $\B_+$, satisfy the corner condition \eqref{eq:corner-condition}. Then the ADM energy--momentum of $(M \setminus \Omega, g_+, k_+)$ satisfies $E \ge |P|$.
\end{corollary}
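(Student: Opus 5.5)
The corollary follows directly from Theorem~\ref{thm:main}, so the plan is to assemble the two pieces of data into an initial data set with corners in the sense of Definition~\ref{def:idc} and then check each hypothesis of that theorem.

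First I will form the manifold. If $\Omega$ is given abstractly, I glue $\Omega$ to $M\setminus\Omega$ along $\Sigma$ using the identification that makes the induced metrics agree. This produces a smooth manifold $M$ that contains $\Omega$ as a compact region with $\partial\Omega=\Sigma$. A collar neighborhood gives the smooth structure near $\Sigma$. Only the induced metrics need to agree there; the full metrics need not. The pair $(\Omega,g_-,k_-)$, $(M\setminus\Omega,g_+,k_+)$ is then an initial data set with corners along $\Sigma$, because:
\begin{itemize}
\item the exterior is asymptotically flat of type $(q,q_0,\alpha)$ by hypothesis;
\item $g_-|_{T\Sigma}=\gamma_+=g_+|_{T\Sigma}$.
\end{itemize}

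Next I verify the remaining hypotheses of Theorem~\ref{thm:main}.
\begin{itemize}
\item \textbf{Dominant energy condition.} The DEC holds on each side: on $\Omega$ by the assumption on the fill-in, and on $M\setminus\Omega$ by the assumption on the exterior.
\item \textbf{Corner condition.} Condition \eqref{eq:corner-condition} on $\B_\pm$ is assumed. The one point to check is the normal convention: both $\B_\pm$ must be computed with respect to the unit normal pointing out of $\Omega$. For the exterior this normal points into $M\setminus\Omega$, toward infinity. I read the hypothesis with this convention, as fixed before \eqref{eq:corner-condition}.
\item \textbf{Regularity.} For $n\ge8$, the extra weighted regularity of $(g_+-\delta,k_+)$ is assumed explicitly. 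The interior data are taken smooth up to $\Sigma$ under the standing convention; this is at least the $C^{2,\alpha}\times C^{1,\alpha}$ needed for the mollification.
\end{itemize}

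Applying Theorem~\ref{thm:main} then gives $E\ge|P|$ for the exterior end $(M\setminus\Omega,g_+,k_+)$. This is exactly the conclusion, since the ADM quantities depend only on the exterior data.

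There is no substantive obstacle. The only care needed is in the gluing and orientation bookkeeping: the glued space must be a smooth manifold with $\Omega$ a compact smooth region, and the mean curvature and connection-form sign conventions for $\B_\pm$ must match those used in \eqref{eq:corner-condition}.
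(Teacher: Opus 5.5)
Your proposal is correct and matches the paper: the paper gives no separate proof, presenting the corollary as a rereading of Theorem~\ref{thm:main} once the fill-in and the exterior are viewed as an initial data set with corners along $\Sigma$. Your abstract gluing step goes beyond what is needed, since the paper already fixes $\Omega\subset M$ in its setup. Your check that both $\B_\pm$ use the normal pointing out of $\Omega$ is the right point to confirm.
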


Corollary~\ref{cor:fill-in} can be interpreted as an obstruction to fill-ins. An asymptotically flat dominant energy initial data set with boundary and $E < |P|$ admits no dominant energy fill-in of its boundary data compatible with \eqref{eq:corner-condition}. In the time-symmetric case this is the obstruction to filling in the boundary data of a negative-mass extension by a metric of nonnegative scalar curvature, in the spirit of the quasi-local mass positivity results of Shi--Tam \cite{ShiTam2002}.

In Section~\ref{sec:strict}, we deform to strict DEC while preserving the corner inequality across $\Sigma$, and prove Theorem~\ref{thm:strict-deform}. Section~\ref{sec:mollification} applies Miao's mollification to the deformed data. Section~\ref{sec:conformal-pde} solves the linear conformal PDE and verifies the DEC. We prove Theorem~\ref{thm:main} in Section~\ref{sec:main-proof}.

\medskip

\noindent\textbf{Acknowledgments.} The authors are grateful to Lan-Hsuan Huang for helpful discussions and suggestions.

\section{Strict DEC deformation}\label{sec:strict}
From this point on we work on the glued manifold $M=\Omega\cup\Sigma\cup(M\setminus\Omega)$ and fix a smooth function $r:M\to[1,\infty)$ that coincides with the radial coordinate $|x|$ of the asymptotic chart outside the compact set $K$, so that expressions such as $(1+r)^{-a}$ are defined on all of $M$. Throughout the paper, $C$ denotes a positive constant depending only on $n$, $(\Omega, g_-, k_-)$, or $(M\setminus \Omega, g_+, k_+)$, and the same letter $C$ may be used for different constants.
\subsection{Conformal deformation on the compact region}\label{sec:strict-interior}

We begin by deforming the interior region $(\Omega, g_-, k_-)$ to achieve the strict DEC. Throughout this section we fix $p>n/(1-\alpha)$, so that $p>n$ and $\alpha<1-n/p$. The following proposition applies to any smooth compact initial data set. 

\begin{proposition}\label{prop:strict-interior}
Let $(\Omega, g, k)$ be a smooth compact initial data set with boundary $\Sigma$ satisfying $\mu \ge |J|_g$. For any $\lambda > 0$, there exists a conformal deformation $\bar{g} = u^{4/(n-2)}g$, $\bar{k} = u^{2/(n-2)}k$ such that:
\begin{enumerate}[label=\textup{(\alph*)}]
    \item $u \ge 1$ on $\Omega$ and $u = 1$ on $\Sigma$,
    \item $\bar\mu - |\bar J|_{\bar g} \ge \lambda\, u^{-(n+2)/(n-2)} > 0$ on $\Omega$,
    \item $\bar\gamma = \gamma$, $\bar\omega = \omega$, and $\bar\tau = \tau$ on $\Sigma$,
    \item $\|u - 1\|_{C^{1,\alpha}(\Omega)} \le C\lambda$ for a constant $C$ depending only on $(\Omega, g, k)$,
    \item $u \in C^{2,\alpha}(\Omega)$.
\end{enumerate}
Setting
\begin{equation} \label{eq:delta-H-def}
    \delta_H := -c_n^{-1}\,\partial_\nu u\big|_\Sigma,
\end{equation}
we have $\delta_H \ge 0$ on $\Sigma$, $\bar H_- = H_- - \delta_H$, and $\|\delta_H\|_{C^{1,\alpha}(\Sigma)} \le C\lambda$. Here $c_n = \frac{n-2}{2(n-1)}$.
\end{proposition}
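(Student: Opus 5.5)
The plan is to reduce the strict DEC to a scalar differential inequality for $u$, and then solve a semilinear Dirichlet problem whose solution is exactly linear in $\lambda$.

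\textbf{Conformal transformation formulas.} For $\bar g=u^{4/(n-2)}g$ and $\bar k=u^{2/(n-2)}k$ we have $|\bar k|^2_{\bar g}=u^{-4/(n-2)}|k|^2_g$, $\tr_{\bar g}\bar k=u^{-2/(n-2)}\tr_g k$, and
\[
R_{\bar g}=u^{-\frac{n+2}{n-2}}\bigl(-2c_n^{-1}\Delta_g u+R_g u\bigr).
\]
Since $u^{-4/(n-2)}=u^{-(n+2)/(n-2)}u$, these combine to
\[
\bar\mu=u^{-\frac{n+2}{n-2}}\bigl(-c_n^{-1}\Delta_g u+\mu\, u\bigr).
\]
For the current, the change of Christoffel symbols in $\dvg_{\bar g}\bar k$ produces only terms of the form $k*du$. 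A scaling check (constant $u$ multiplies $|J|_g$ by $u^{-4/(n-2)}$) then gives a pointwise bound
\[
|\bar J|_{\bar g}\le u^{-\frac{n+2}{n-2}}\bigl(u|J|_g+b|\nabla u|_g\bigr),
\]
with $b=C_n|k|_g$ smooth and bounded on $\Omega$. Hence (b) follows once
\[
-c_n^{-1}\Delta_g u+(\mu-|J|_g)u-b|\nabla u|_g\ge\lambda .
\]

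\textbf{The PDE.} Since $\mu-|J|_g\ge0$, it suffices to take $u=1+v$ with $v\ge0$ solving
\[
-c_n^{-1}\Delta_g v-b|\nabla v|_g=\lambda\ \text{ in }\Omega,\qquad v=0\ \text{ on }\Sigma .
\]
The nonlinearity is positively homogeneous of degree one in $v$. So it is enough to solve once with $\lambda=1$, obtaining $v_1$, and set $v_\lambda=\lambda v_1$. This makes every estimate linear in $\lambda$, which gives (d) and the bound on $\delta_H$.

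Existence for $\lambda=1$ is the main obstacle, because the gradient term carries no sign. I would use Leray--Schauder on $W^{2,p}_0$ with $p>n/(1-\alpha)$, via the map $w\mapsto v$ solving $-c_n^{-1}\Delta v=1+b|\nabla w|$. Compactness comes from $W^{2,p}\hookrightarrow C^{1,\alpha}$. For the a priori bound along $v=t\,T(v)$, I would rewrite $b|\nabla v|=\beta\cdot\nabla v$ with a bounded measurable vector field $\beta=b\nabla v/|\nabla v|$. Then ABP, or the maximum principle for $-c_n^{-1}\Delta-t\beta\cdot\nabla$, bounds $\sup|v|$ uniformly in $t$, and $L^p$ estimates together with interpolation bound $\|v\|_{W^{2,p}}$.

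Sign and regularity follow from the same rewriting. Writing the equation as $-c_n^{-1}\Delta v-\beta\cdot\nabla v=1>0$, the weak maximum principle gives $v\ge0$. Moreover $|\nabla v|\in C^{0,\alpha}$, so Schauder theory upgrades $v$ to $C^{2,\alpha}(\Omega)$ with $\|v\|_{C^{2,\alpha}}\le C$. This yields (a), (d) and (e).

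\textbf{Boundary data.} Since $u=1$ on $\Sigma$, we get $\bar\gamma=\gamma$ and $\bar\tau=u^{-2/(n-2)}\tau=\tau$. The new unit normal is $\bar\nu=u^{-2/(n-2)}\nu=\nu$ on $\Sigma$, so $\bar\omega=\bar k(\bar\nu,\cdot)=\omega$, which is (c). The conformal mean curvature formula
\[
\bar H=u^{-2/(n-2)}\Bigl(H+\tfrac{2(n-1)}{n-2}\,u^{-1}\partial_\nu u\Bigr)
\]
gives $\bar H=H+c_n^{-1}\partial_\nu u=H-\delta_H$ on $\Sigma$. Since $v\ge0$ in $\Omega$, $v=0$ on $\Sigma$, and $\nu$ points out of $\Omega$, we have $\partial_\nu v\le0$, hence $\delta_H\ge0$. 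Finally,
\[
\|\delta_H\|_{C^{1,\alpha}(\Sigma)}\le C\|v_\lambda\|_{C^{2,\alpha}}\le C\lambda .
\]
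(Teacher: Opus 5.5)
Your proposal is correct and follows essentially the same route as the paper. You solve the same semilinear Dirichlet problem $\Delta_g u+|k|_g|\nabla u|_g=-c_n\lambda$ (your $b$ is $c_n^{-1}|k|_g$), use Leray--Schauder with ABP and $W^{2,p}$ a priori bounds after writing the gradient term as $\beta\cdot\nabla v$ with bounded measurable $\beta$, then upgrade by Schauder and read off the boundary data from $u=1$ on $\Sigma$. The differences are minor: you solve once at $\lambda=1$ and rescale by positive homogeneity, which gives the linear-in-$\lambda$ bounds for free where the paper tracks $\lambda'$ through each estimate, and you run the fixed-point argument in $W^{2,p}\cap W^{1,p}_0$ rather than $C^{1,\alpha}_0$.
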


We prove Proposition~\ref{prop:strict-interior} at the end of this subsection. Setting $\kappa = |k|_g$ and $\lambda' = c_n\lambda > 0$, we consider the equation
\begin{equation} \label{eq:nonlinear-pde}
        \Delta_g u + \kappa|\nabla u|_g = -\lambda', \qquad u = 1 \text{ on } \Sigma.
\end{equation}
If $u \ge 1$ solves \eqref{eq:nonlinear-pde}, then $\Delta_g u = -\kappa|\nabla u|_g - \lambda'$, and substituting into the conformal inequality \eqref{eq:conf-ineq} for the dominant energy scalar found in Appendix~\ref{sec:appendix} yields
\begin{align}
    \bar\mu - |\bar J|_{\bar g}
    &\ge u^{-4/(n-2)}\Bigl(\mu - |J|_g + c_n^{-1}\,u^{-1}\kappa|\nabla u|_g + c_n^{-1}\,u^{-1}\lambda' - c_n^{-1}\,u^{-1}\kappa|\nabla u|_g\Bigr) \notag\\
    &= u^{-4/(n-2)}\bigl(\mu - |J|_g + \lambda\,u^{-1}\bigr) \notag\\
    &\ge \lambda\,u^{-(n+2)/(n-2)} > 0,
\end{align}
using $\mu \ge |J|_g$ and $u \ge 1$. It therefore suffices to establish existence of a solution $u \ge 1$ to \eqref{eq:nonlinear-pde} with the stated estimates.

\begin{remark}
Equation \eqref{eq:nonlinear-pde} is a semilinear elliptic equation whose nonlinearity is Lipschitz in $\nabla u$, with linear growth. Structurally similar equations appear in \cite{HKK22}, where the coefficient of the gradient term is $\tr_gk$, rather than $|k|_g$. The Leray--Schauder construction below is also closely related to \cite[Section~3.1]{Jaracz2025}, which treats a three-dimensional Neumann problem.
\end{remark}

We devote the remainder of this subsection to the existence theory for \eqref{eq:nonlinear-pde}.

\begin{lemma}[Uniqueness]\label{lem:uniqueness}
The equation \eqref{eq:nonlinear-pde} has at most one solution $u \in W^{2,p}(\Omega) \cap C^0(\Omega)$ for $p > n$.
\end{lemma}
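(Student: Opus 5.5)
Suppose $u_1,u_2\in W^{2,p}(\Omega)\cap C^0(\Omega)$, $p>n$, both solve \eqref{eq:nonlinear-pde}, and set $w=u_1-u_2$. The plan is to write the equation for $w$ as a \emph{linear} elliptic inequality with a bounded first-order coefficient and then apply the weak maximum principle for strong solutions (Aleksandrov--Bakelman--Pucci, as in Gilbarg--Trudinger Theorem~9.1). Since $p>n$, Sobolev embedding gives $u_i\in C^{1}(\Omega)$. In particular $W^{2,p}\subset W^{2,n}_{\mathrm{loc}}$, so the ABP maximum principle applies to $w$. Subtracting the two equations gives
\[
\Delta_g w + \kappa\bigl(|\nabla u_1|_g-|\nabla u_2|_g\bigr)=0,\qquad w=0 \text{ on }\Sigma .
\]

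The key step is to linearize the gradient nonlinearity. Define the vector field
\[
b=\begin{cases}\kappa\,\dfrac{|\nabla u_1|_g-|\nabla u_2|_g}{|\nabla w|_g^2}\,\nabla w, & \nabla w\neq 0,\\[4pt] 0, & \nabla w=0.\end{cases}
\]
By the reverse triangle inequality, $\bigl||\nabla u_1|-|\nabla u_2|\bigr|\le|\nabla w|$, so $|b|_g\le\kappa\le\sup_\Omega|k|_g<\infty$. Moreover $b$ is measurable, and $\kappa(|\nabla u_1|-|\nabla u_2|)=\langle b,\nabla w\rangle_g$ almost everywhere. Hence $w$ satisfies
\[
Lw:=\Delta_g w+\langle b,\nabla w\rangle_g=0 \text{ a.e. in }\Omega,\qquad w=0\text{ on }\Sigma .
\]
In local coordinates $L$ is uniformly elliptic on the compact manifold $\Omega$, with smooth principal coefficients $g^{ij}$, bounded measurable first-order coefficients, and no zeroth-order term.

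The weak maximum principle for $W^{2,n}_{\mathrm{loc}}\cap C^0$ strong solutions with bounded drift then yields $\sup_\Omega w\le\sup_\Sigma w^+=0$. Applying the same argument to $-w$, which satisfies $L(-w)=0$, gives $\inf_\Omega w\ge 0$. Hence $w\equiv 0$ and $u_1=u_2$.

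The only delicate point is that ABP is usually stated on Euclidean domains. I would handle this by covering $\Omega$ with finitely many coordinate charts. Alternatively, one can argue directly: if $\max w>0$ it is attained at an interior point, and the strong maximum principle for strong solutions (Gilbarg--Trudinger Theorem~9.6) propagates the maximum along the connected component. The set where $w$ equals its maximum is then open and closed in that component. The component must meet $\Sigma$, because $\Omega$ is compact with boundary $\Sigma$ and the component's closure is a compact region with nonempty boundary, so $w$ would be positive somewhere on $\Sigma$, contradicting $w=0$ there. Since $b$ is merely bounded measurable, no continuity of coefficients is needed for either the ABP or the strong maximum principle, so I expect no real obstacle beyond checking these hypotheses.
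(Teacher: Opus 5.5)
Your proof is correct and is essentially the paper's argument. The paper also rewrites $\kappa(|\nabla u_1|_g-|\nabla u_2|_g)$ as a bounded drift term acting on $\nabla(u_1-u_2)$, using $V=(\nabla u_1+\nabla u_2)/(|\nabla u_1|_g+|\nabla u_2|_g)$ with $|V|_g\le 1$ in place of your difference-quotient field, and then applies the weak maximum principle with $c=0$ to $\pm(u_1-u_2)$. Your chart-by-chart strong maximum principle fallback is a sound way to justify the maximum principle on the manifold $\Omega$, which the paper leaves implicit; the ``cover by charts'' version of ABP is less convincing, since ABP is a global estimate and does not localize directly.
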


\begin{proof}
Let $u_1, u_2$ be two solutions and set $\eta = u_1 - u_2$. Where $|\nabla u_1|_g + |\nabla u_2|_g \neq 0$, set
\begin{equation} \label{eq:V-def}
    V := \frac{\nabla u_1 + \nabla u_2}{|\nabla u_1|_g + |\nabla u_2|_g},
\end{equation}
and $V := 0$ otherwise, so that $|V|_g \le 1$ a.e. The difference-of-norms factorization then gives $|\nabla u_1|_g - |\nabla u_2|_g = \ip{V}{\nabla\eta}_g$ a.e.\ on $\Omega$. Subtracting the two equations yields
\[
    \Delta_g\eta + \kappa\ip{V}{\nabla\eta}_g = 0 \quad \text{in } \Omega, \qquad \eta = 0 \text{ on } \Sigma.
\]
This is a linear equation for $\eta$ with first-order coefficient $b := \kappa V \in L^\infty(\Omega)$ and zeroth-order coefficient $c = 0$. Since $c \le 0$, the weak maximum principle gives
\[
    \sup_\Omega \eta \le \sup_\Sigma \eta^+ = 0, \qquad \sup_\Omega (-\eta) \le \sup_\Sigma (-\eta)^+ = 0,
\]
so $\eta \equiv 0$.
\end{proof}

\begin{lemma}[A priori bounds]\label{lem:apriori}
Any solution $u_\sigma$ to the homotopy
\begin{equation} \label{eq:homotopy}
    \Delta_g u_\sigma + \sigma\kappa|\nabla u_\sigma|_g = -\sigma\lambda', \qquad u_\sigma = 1 \text{ on } \Sigma, \qquad \sigma \in [0,1],
\end{equation}
satisfies $u_\sigma \ge 1$ and $\|u_\sigma - 1\|_{C^{1,\alpha}(\Omega)} \le C\lambda'$ for a constant $C$ independent of $\sigma$.
\end{lemma}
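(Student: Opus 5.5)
We prove the two assertions of Lemma~\ref{lem:apriori} in turn. First comes the lower bound $u_\sigma\ge 1$ by a maximum principle. Then comes an $L^\infty$ bound on $w_\sigma:=u_\sigma-1$ of size $C\lambda'$, via a barrier. Finally a $W^{2,p}$ bound, again of size $C\lambda'$, follows by treating the gradient term as a bounded lower-order coefficient. Morrey's embedding with $p>n/(1-\alpha)$ then converts this into the $C^{1,\alpha}$ bound. Throughout we work with solutions $u_\sigma\in W^{2,p}(\Omega)$ for our fixed $p$, which is the class in which the Leray--Schauder argument will be run.

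\emph{Lower bound.} Set $w=w_\sigma$, so that $\Delta_g w+\sigma\kappa|\nabla w|_g=-\sigma\lambda'\le 0$ and $w=0$ on $\Sigma$. Near any point we write $|\nabla w|_g=\ip{b}{\nabla w}_g$ with $b:=\nabla w/|\nabla w|_g$ where $\nabla w\neq 0$ and $b:=0$ otherwise, so $|b|_g\le 1$. Then $w$ satisfies the linear inequality
\[
L w:=\Delta_g w+\sigma\kappa\ip{b}{\nabla w}_g\le 0,
\]
whose drift $\sigma\kappa b$ is bounded by $\sup_\Omega\kappa$. The weak (Aleksandrov/Bony) maximum principle for $W^{2,p}$ strong solutions, applied exactly as in Lemma~\ref{lem:uniqueness}, gives $\inf_\Omega w\ge\inf_\Sigma w^- =0$. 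Hence $u_\sigma\ge1$.

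\emph{Sup bound.} For the upper bound we use the same linearization, now with $Lw=-\sigma\lambda'\ge-\lambda'$. Let $\beta:=\sup_\Omega\kappa$ and fix a coordinate function, or any smooth function $x_1$ on $\Omega$ with $\nabla x_1$ controlled, and use the standard barrier from the ABP estimate. Concretely, by \cite[Thm.~9.1]{GT}-type (or Thm.~3.7 type) estimates, $\sup_\Omega w\le C\sup_\Omega|Lw|\le C\lambda'$, where $C$ depends only on $\mathrm{diam}\,\Omega$, $g$ and $\beta$. The key point is that this constant depends on $b$ only through $|b|\le 1$, hence is independent of $\sigma$ and of the solution. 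So $0\le w\le C\lambda'$.

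\emph{Higher regularity.} This is the main obstacle, because the nonlinearity is only Lipschitz in the gradient and we need linear dependence on $\lambda'$ rather than just a bound. We write $\Delta_g w=f$ with
\[
f:=-\sigma\kappa|\nabla w|_g-\sigma\lambda',\qquad |f|\le C(|\nabla w|_g+\lambda').
\]
The $L^p$ estimate for $\Delta_g$ with zero Dirichlet data gives $\|w\|_{W^{2,p}}\le C(\|\nabla w\|_{L^p}+\lambda'+\|w\|_{L^p})$. The interpolation inequality $\|\nabla w\|_{L^p}\le\epsilon\|w\|_{W^{2,p}}+C_\epsilon\|w\|_{L^\infty}$ absorbs the gradient term, yielding $\|w\|_{W^{2,p}}\le C\lambda'$. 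Morrey's embedding $W^{2,p}\hookrightarrow C^{1,\alpha}$, valid since $\alpha<1-n/p$, then gives $\|u_\sigma-1\|_{C^{1,\alpha}(\Omega)}\le C\lambda'$ with $C$ independent of $\sigma$.

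If a $C^{2,\alpha}$ bound is needed later, it can be bootstrapped from here: $f$ is then in $C^{0,\alpha}$ (Lipschitz in $\nabla w$) with norm $\le C\lambda'$, and Schauder estimates apply. That step is not required for the statement itself.
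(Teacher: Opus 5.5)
Your proposal is correct and follows essentially the same route as the paper. Both arguments use the minimum principle for $u_\sigma\ge1$, then an ABP-type sup bound whose constant depends on the drift only through $|b|\le\sup_\Omega\kappa$, then a global $W^{2,p}$ estimate, and finally Morrey's embedding. The differences are cosmetic. The paper keeps the drift term $b_\sigma\cdot\nabla w_\sigma$ inside the operator and applies Gilbarg--Trudinger, Theorem~9.13, directly, whereas you move $\sigma\kappa|\nabla w|_g$ to the right-hand side and absorb it by interpolation, which is how that theorem handles lower-order terms internally. For $u_\sigma\ge1$, the paper reads it off directly from $\Delta_g u_\sigma\le0$ without linearizing.
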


\begin{proof}
Since $\Delta_g u_\sigma=-\sigma\kappa|\nabla u_\sigma|_g-\sigma\lambda'\le0$, $u_\sigma$ is superharmonic, so $\min_{\Omega}u_\sigma=\min_\Sigma u_\sigma=1$, which establishes the lower bound. Now we show the $L^\infty$ bound uniform in $\sigma$. Set $w_\sigma=u_\sigma-1\ge0$, so $w_\sigma=0$ on $\Sigma$ and
\[
    \Delta_gw_\sigma+b_\sigma\cdot\nabla w_\sigma = -\sigma\lambda', \qquad
    b_\sigma:=\sigma\kappa\,\frac{\nabla u_\sigma}{|\nabla u_\sigma|_g}\in L^\infty(\Omega), \quad
    |b_\sigma|_g\le\kappa \text{ for every } \sigma\in[0,1],
\]
where $b_\sigma$ is set to $0$ where $\nabla u_\sigma=0$, as in \eqref{eq:V-def}. By the usual maximum principle estimates, since $w_\sigma = 0$ on $\Sigma$,
\[
    \sup_\Omega w_\sigma \le C_1\,\|\sigma\lambda'\|_{L^n(\Omega)} \le C_1\lambda',
\]
where $C_1=C_1(n,(\Omega,g),\sup_\Omega\kappa)$ depends only on the fixed bound $\|b_\sigma\|_{L^\infty}\le\sup_\Omega\kappa$ (not on $\sigma$ itself), and $\sup_\Omega(-w_\sigma)\le0$ since $w_\sigma\ge0$. Hence
\begin{equation}\label{eq:winfty-uniform}
    \|w_\sigma\|_{L^\infty(\Omega)}\le C_1\lambda',
\end{equation}
with $C_1$ independent of $\sigma$.

By the global $W^{2,p}$ estimate \cite[Theorem~9.13]{GT}, we have
\[
    \|w_\sigma\|_{W^{2,p}(\Omega)} \le C_2\bigl(\|\sigma\lambda'\|_{L^p(\Omega)} + \|w_\sigma\|_{L^p(\Omega)}\bigr),
\]
where $C_2=C_2(n,p,(\Omega,g),\sup_\Omega\kappa)$ depends only on the fixed bound $\|b_\sigma\|_{L^\infty}\le\sup_\Omega\kappa$, hence is independent of $\sigma$. Using $\|w_\sigma\|_{L^p(\Omega)}\le\mathrm{Vol}(\Omega)^{1/p}\|w_\sigma\|_{L^\infty(\Omega)}$ and \eqref{eq:winfty-uniform},
\[
    \|w_\sigma\|_{W^{2,p}(\Omega)} \le C_2\lambda' + C_2\,\mathrm{Vol}(\Omega)^{1/p}C_1\lambda' = C_3\lambda',
\]
with $C_3$ independent of $\sigma$. The Morrey embedding $W^{2,p}(\Omega)\hookrightarrow C^{1,\alpha}(\Omega)$ for $p>n$ yields $\|w_\sigma\|_{C^{1,\alpha}(\Omega)}\le C\lambda'$, independent of $\sigma$.
\end{proof}

\begin{corollary}\label{cor:existence-compact}
For any $\lambda' > 0$, the equation \eqref{eq:nonlinear-pde} admits a unique solution $u \in W^{2,p}(\Omega) \cap C^{1,\alpha}(\Omega)$ with $u \ge 1$. Moreover, $\|u - 1\|_{C^{1,\alpha}(\Omega)} \le C\lambda'$.
\end{corollary}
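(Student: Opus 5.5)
We will obtain the solution by the Leray--Schauder fixed point theorem (in the form of \cite[Theorem~11.3]{GT}), working in the Banach space $\mathcal X = C^{1,\beta}(\Omega)$ for some fixed $\beta\in(0,\alpha)$, and then upgrade regularity and invoke Lemmas~\ref{lem:uniqueness} and~\ref{lem:apriori}. It is convenient to work with $w = u-1$, which has zero boundary values.

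\textbf{Setting up the compact map.} For $v\in\mathcal X$, let $w = T(v)$ be the unique solution in $W^{2,p}(\Omega)$ of the linear Dirichlet problem
\[
\Delta_g w = -\kappa|\nabla v|_g - \lambda' \quad\text{in }\Omega, \qquad w = 0 \text{ on } \Sigma.
\]
The right-hand side lies in $L^\infty(\Omega)\subset L^p(\Omega)$ because $\kappa$ is bounded and $\nabla v$ is continuous. Existence and uniqueness therefore follow from \cite[Theorem~9.15]{GT}. The estimate \cite[Lemma~9.17]{GT} gives
\[
\|w\|_{W^{2,p}} \le C\bigl(\|v\|_{C^1}+\lambda'\bigr).
\]
Continuity of $T$ comes from two facts: the map $v\mapsto \kappa|\nabla v|_g$ is Lipschitz from $C^1$ to $L^\infty$, since $\bigl||a|-|b|\bigr|\le|a-b|$, and the solution operator is linear and bounded $L^p\to W^{2,p}$. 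Compactness comes from the compact embedding $W^{2,p}\hookrightarrow C^{1,\beta}$ for $p>n$ and $\beta<1-n/p$, which holds since $\beta<\alpha<1-n/p$. Thus $T:\mathcal X\to\mathcal X$ is continuous and compact.

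\textbf{Applying Leray--Schauder.} Suppose $v = \sigma T(v)$ with $\sigma\in[0,1]$. Then $u_\sigma := 1+v$ satisfies
\[
\Delta_g u_\sigma = -\sigma\kappa|\nabla u_\sigma|_g - \sigma\lambda', \qquad u_\sigma = 1 \text{ on } \Sigma,
\]
which is exactly the homotopy \eqref{eq:homotopy}. Lemma~\ref{lem:apriori} gives $\|v\|_{C^{1,\alpha}}\le C\lambda'$, and hence a bound in $\mathcal X$ independent of $\sigma$. Leray--Schauder then yields a fixed point $w = T(w)$, so $u = 1+w\in W^{2,p}(\Omega)$ solves \eqref{eq:nonlinear-pde}.

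\textbf{Concluding.} Applying Lemma~\ref{lem:apriori} with $\sigma=1$ gives $u\ge1$, $u\in C^{1,\alpha}(\Omega)$ by Morrey's embedding, and $\|u-1\|_{C^{1,\alpha}}\le C\lambda'$. Uniqueness is Lemma~\ref{lem:uniqueness}.

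The main point needing care is the choice of space. Compactness of $T$ requires the loss from $\alpha$ to $\beta$, or equivalently working in $C^1$, since $W^{2,p}\hookrightarrow C^{1,\alpha}$ is continuous but need not be compact at the endpoint. With $\beta<\alpha$ strictly this is routine. One also checks that the fixed-point equation $v=\sigma T(v)$ matches \eqref{eq:homotopy} exactly, with $\sigma$ multiplying both the gradient term and $\lambda'$, which it does by linearity of the solution operator.
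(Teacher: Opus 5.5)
Your proof is correct and is essentially the paper's argument: Leray--Schauder applied to the solution operator of the linear Dirichlet problem $\Delta_g w = -\sigma(\lambda'+\kappa|\nabla \tilde v|_g)$, with fixed points of the homotopy controlled by Lemma~\ref{lem:apriori} and uniqueness from Lemma~\ref{lem:uniqueness}. The paper uses the parametrized form \cite[Theorem~11.6]{GT}, which is equivalent here since $T(\cdot,\sigma)=\sigma T(\cdot,1)$, and works directly in $C^{1,\alpha}_0(\Omega)$; your passage to $C^{1,\beta}$ is harmless but unnecessary, because $p>n/(1-\alpha)$ gives $\alpha<1-n/p$ strictly, so $\alpha$ is not the endpoint and $W^{2,p}\hookrightarrow C^{1,\alpha}$ is already compact.
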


\begin{proof}
Set $v = u - 1$, so that \eqref{eq:nonlinear-pde} becomes
\[
    \Delta_g v + \kappa|\nabla v|_g = -\lambda', \qquad v = 0 \text{ on } \Sigma.
\]
Let $B = C^{1,\alpha}_0(\Omega) := \{w \in C^{1,\alpha}(\Omega) : w = 0 \text{ on } \Sigma\}$. For $\tilde v \in B$ and $\sigma \in [0,1]$, let $w = T(\tilde v, \sigma)$ be the unique $W^{2,p} \cap W^{1,p}_0$ solution of the Dirichlet problem
\begin{equation}
    \Delta_g w = -\sigma\bigl(\lambda' + \kappa|\nabla\tilde v|_g\bigr), \qquad w = 0 \text{ on } \Sigma.
\end{equation}
Existence and uniqueness of this solution are guaranteed by \cite[Theorem~9.15]{GT}: the source lies in $L^\infty(\Omega) \subset L^p(\Omega)$, and the operator is $\Delta_g$, with no lower-order terms.

The map $\tilde v \mapsto \kappa|\nabla\tilde v|_g$ is continuous from $B$ to $C^0(\Omega) \subset L^p(\Omega)$, since $\bigl||\nabla\tilde v_1|_g - |\nabla\tilde v_2|_g\bigr| \le |\nabla\tilde v_1 - \nabla\tilde v_2|_g$ pointwise. Composing with the solution operator $(\Delta_g)^{-1}\colon L^p(\Omega) \to W^{2,p}(\Omega)\cap W^{1,p}_0(\Omega)$ of the Dirichlet problem and the compact embedding $W^{2,p}(\Omega) \hookrightarrow\hookrightarrow C^{1,\alpha}(\Omega)$ for $p > n$ shows that $T : B \times [0,1] \to B$ is continuous and maps bounded sets to precompact sets, so is itself compact.

For $\sigma = 0$ the source vanishes, thus $T(\cdot, 0) \equiv 0$ and $v_0 = 0$ is its unique fixed point. A fixed point $v = T(v, \sigma)$ solves
\[
    \Delta_g v + \sigma\kappa|\nabla v|_g = -\sigma\lambda', \qquad v = 0 \text{ on } \Sigma,
\]
which is the homotopy \eqref{eq:homotopy}. Therefore Lemma~\ref{lem:apriori} gives $u_\sigma \ge 1$ and $\|v\|_{C^{1,\alpha}} = \|u_\sigma - 1\|_{C^{1,\alpha}} \le C\lambda'$ for $u_\sigma = v+1$, a bound uniform in $\sigma$. By the Leray--Schauder fixed point theorem \cite[Theorem~11.6]{GT}, $T(\cdot, 1)$ has a fixed point $v$, which solves $\Delta_g v + \kappa|\nabla v|_g = -\lambda'$. Hence $u = v + 1 \ge 1$ solves \eqref{eq:nonlinear-pde}, and uniqueness follows from Lemma~\ref{lem:uniqueness}. The estimate $\|u - 1\|_{C^{1,\alpha}} \le C\lambda'$ is precisely the a priori bound from Lemma~\ref{lem:apriori}.
\end{proof}

\begin{proof}[Proof of Proposition~\ref{prop:strict-interior}]
Apply Corollary~\ref{cor:existence-compact} with $\lambda' = c_n\lambda$. Properties (a)--(c) follow from $u = 1$ on $\Sigma$ and the discussion above. Property (d) follows from Corollary~\ref{cor:existence-compact}, which gives $\|u-1\|_{C^{1,\alpha}} \le C\lambda' = Cc_n\lambda$. The mean curvature change \eqref{eq:H-change} together with \eqref{eq:delta-H-def} gives $\bar H_- = H_- - \delta_H$. Finally, $\delta_H \ge 0$ on $\Sigma$, since $u \ge 1$ achieves its minimum value $1$ on $\Sigma$ and $\nu$ points outward, so $\partial_\nu u \le 0$.

It remains to prove (e) and the stated boundary estimate. From Corollary~\ref{cor:existence-compact}, $u\in C^{1,\alpha}(\Omega)$, so $|\nabla u|_g\in C^{0,\alpha}(\Omega)$. Since the original compact data are smooth up through the boundary, $\kappa=|k|_g\in C^{0,\alpha}(\Omega)$ and
\[
    F:=-\lambda'-\kappa|\nabla u|_g\in C^{0,\alpha}(\Omega),
    \qquad
    \|F\|_{C^{0,\alpha}(\Omega)}\le C\lambda.
\]
Equation \eqref{eq:nonlinear-pde} reads $\Delta_gu=F$, with zero Dirichlet data for $u-1$. Standard boundary Schauder estimates therefore give
\[
    \|u-1\|_{C^{2,\alpha}(\Omega)}
       \le C\bigl(\|F\|_{C^{0,\alpha}(\Omega)}+\|u-1\|_{C^0(\Omega)}\bigr)
       \le C\lambda.
\]
This proves (e). Taking the normal derivative and its trace on $\Sigma$ yields $\|\delta_H\|_{C^{1,\alpha}(\Sigma)}\le C\lambda$.
\end{proof}

\subsection{Matching deformation on the exterior}\label{sec:strict-exterior}

The exterior deformation produces a strict dominant energy gap with a definite decay rate. Fix
\begin{equation}\label{eq:q1-choice}
    q_1\in\Bigl(\max(q,n-2+q_0),\;q+\tfrac{n-2}{2}\Bigr].
\end{equation}
The gap below decays like $(1+r)^{-2-q_1}$ and is tracked at this rate through the remaining deformations.

\begin{proposition}\label{prop:exterior-matching}
Let $(M \setminus\Omega, g, k)$ be an asymptotically flat initial data set of type $(q,q_0,\alpha)$ with boundary $\Sigma$ satisfying $\mu \ge |J|_g$, and assume $(g-\delta,\,k)\in C^{m,\alpha}_{-q}(M\setminus\Omega)\times C^{m-1,\alpha}_{-1-q}(M\setminus\Omega)$ for some fixed $m\ge2$, where $\delta$ is the Euclidean metric in the asymptotic chart. Let $\B^{m,\alpha}(\Sigma)$ denote the space of Bartnik boundary data with $\gamma\in C^{m,\alpha}(\Sigma)$ and $H,\omega,\tau\in C^{m-1,\alpha}(\Sigma)$, and let $\mathcal{D}$ be Bartnik boundary data on $\Sigma$ satisfying $\|\mathcal{D} - \B(g, k)\|_{\B^{m,\alpha}(\Sigma)} \le \lambda$. Then for $\lambda$ sufficiently small, there exists a deformation $(\bar{g}, \bar{k})$ on $M \setminus \Omega$ such that:
\begin{enumerate}[label=\textup{(\alph*)}]
    \item $\B(\bar{g}, \bar{k}) = \mathcal{D}$ on $\Sigma$,
    \item $\bar\mu - |\bar J|_{\bar g} \ge \lambda(1+r)^{-2-q_1}$,
    \item $|E(\bar{g}, \bar{k}) - E(g, k)| + |P(\bar{g}, \bar{k}) - P(g, k)| \to 0$
          as $\lambda\to0$,
    \item $\bar\mu,\bar J\in C^{0,\alpha}_{-n-q_0}(M\setminus\Omega)$.
\end{enumerate}
The deformation belongs to $C^{m,\alpha}_{-q}\times C^{m-1,\alpha}_{-1-q}$ relative to the Euclidean background on the end.
\end{proposition}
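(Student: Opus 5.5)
The plan is to follow the Corvino--Huang modified constraint operator approach as packaged in the proposition of Hirsch--Huang \cite{HH2025}. The first step adds a small positive dominant-energy gap. The second step uses surjectivity of the linearized operator, augmented by the Bartnik boundary map, to solve for a correction realizing both the gap and the prescribed data $\mathcal{D}$. Concretely, write $\Phi(g,k)=(\mu,J)$ and use the modified operator
\[
\bar\Phi_{(g,k)}(h,w)=\Phi(g+h,k+w)+\bigl(0,\tfrac12 (g+h)\cdot J\bigr)\ \text{(Corvino--Huang modification)},
\]
whose linearization at $(g,k)$, together with the Bartnik boundary map $\B$, is surjective from $C^{m,\alpha}_{-q}\times C^{m-1,\alpha}_{-1-q}$ onto $C^{m-2,\alpha}_{-2-q}\times C^{m-2,\alpha}_{-2-q}\times \B^{m,\alpha}(\Sigma)$. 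The point of the modification is that the DEC gap is preserved in the form
\[
\bar\mu-|\bar J|_{\bar g}\ge \mu-|J|_g+\bigl(\text{prescribed increment}\bigr)-\text{(quadratic error)}.
\]
This is the structure used in \cite{CorvHuang2020,HH2025}.

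The construction runs in three steps. First, choose a target $(\mu,J)$ perturbation: ask the modified constraint map to take the value $\bigl(\mu+2\lambda(1+r)^{-2-q_1},\,J\bigr)$ (in modified form) while $\B(\bar g,\bar k)=\mathcal D$. Since $q_1>q$, the function $(1+r)^{-2-q_1}$ lies in $C^{m-2,\alpha}_{-2-q}$ with norm $O(1)$, so the total data perturbation has size $O(\lambda)$. Second, apply the implicit function theorem, i.e.\ the Hirsch--Huang proposition, which gives a solution $(\bar g,\bar k)=(g+h,k+w)$ with $\|(h,w)\|\le C\lambda$. The Corvino--Huang identity then yields $\bar\mu-|\bar J|_{\bar g}\ge 2\lambda(1+r)^{-2-q_1}-C\lambda^2(1+r)^{-2-q_1}$ once the quadratic errors are controlled, hence (b) for $\lambda$ small. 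This gives (a) and (b), and the regularity class is immediate.

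For (c) and (d), note that the ADM quantities are continuous on the weighted space when the constraint densities stay in $L^1$. The perturbation of $(\mu,J)$ decays like $(1+r)^{-2-q_1}$, and the choice $q_1>n-2+q_0$ gives $(1+r)^{-2-q_1}\in C^{0,\alpha}_{-n-q_0}$. Together with the original decay \eqref{eq:source-decay}, this gives (d) and integrability. Expressing $E,P$ as boundary integrals at $\Sigma$ plus bulk integrals of $\mu,J$ minus quadratic terms in $(h,w)$ (the mass flux identity), the differences are $O(\lambda)$, giving (c).

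The main obstacle is the error analysis in the second step: the quadratic remainder must decay at the rate $(1+r)^{-2-q_1}$ rather than only $(1+r)^{-2-2q}$. This is exactly why $q_1\le q+\tfrac{n-2}{2}$ is imposed. I would first try to solve in a faster-decaying weighted space for the correction, using $2q\ge q_1$ up to the admissible range, and then check that the weighted surjectivity of the linearization (\cite{HH2025}) holds with this target weight.
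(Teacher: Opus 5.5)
Your plan is the paper's proof. It uses the Corvino--Huang modified operator and local surjectivity of $\bigl(\overline{\Phi}_{(g,k)},\B\bigr)$ from \cite[Proposition~5.3]{HH2025} to solve $\overline{\Phi}_{(g,k)}(\bar g,\bar k)=\overline{\Phi}_{(g,k)}(g,k)+c\lambda(\rho,0)$ with $\B(\bar g,\bar k)=\mathcal D$, followed by ADM continuity. Your flux-identity argument for (c) is essentially the proof of \cite[Proposition~19]{EHLS}, which the paper cites.

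What needs correcting is your last paragraph. The obstacle you describe does not exist, and the detour through a faster-decaying space with a re-check of weighted surjectivity is unnecessary.

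\begin{itemize}
\item Local surjectivity solves the nonlinear equation exactly, so $\bar\mu=\mu+2\lambda\rho$ with no Taylor remainder. No $(1+r)^{-2-2q}$ quadratic term from linearizing $\Phi$ ever appears.
\item The only discrepancy is between $|\bar J|_{\bar g}$ and $|J|_g$, where $\bar j-j=-\tfrac12(\bar g-g)\cdot j$. The modification exists precisely to cancel the first-order change of $|J|$. The paper invokes \cite[Lemma~A.4]{HH2025} to get $\bar\mu-|\bar J|_{\bar g}\ge\mu-|J|_g+\lambda\rho$ with no loss once $|\bar g-g|_g<1$. Your factor $2$ is therefore harmless but not needed.
\item Even a crude bound $O(|\bar g-g|^2|J|)=O\bigl(\lambda^2(1+r)^{-2q-n-q_0}\bigr)$ would be negligible against $\lambda(1+r)^{-2-q_1}$.
\item A hypothetical $O\bigl(\lambda^2(1+r)^{-2-2q}\bigr)$ remainder would also be absorbed. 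Since $q_1<2q$, we have $(1+r)^{-2-2q}\le(1+r)^{-2-q_1}$, so that rate is faster than needed, not slower.
\end{itemize}

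The bound $q_1\le q+\tfrac{n-2}{2}$ plays no role in this proposition. In the paper it guarantees $q_1<2q$ for the later conformal correction, where the gradient error $|K|_h|\nabla u|_h\lesssim(1+r)^{-2-2q}$ must be absorbed in Proposition~\ref{prop:generic-dec}(b).

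Similarly, in (d) the decay of $\bar J$ does not come from the prescribed increment. It comes from $\bar J-J=O(|\bar g-g|\,|J|)$, which inherits the $C^{0,\alpha}_{-n-q_0}$ decay of $J$.
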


\begin{proof}
For this subsection, write
\[
    j:=J^\flat_g=\dvg_g k-d(\tr_gk)
\]
for the momentum one-form of the background data, and for variable data $(\tilde g,\tilde k)$ write $\tilde j:=(J(\tilde g,\tilde k))^{\flat_{\tilde g}}$. Thus $|j|_g=|J|_g$. We use the one-form convention for the constraint map $\Phi(\tilde g,\tilde k)=(\mu(\tilde g,\tilde k),\tilde j)$ and the modified constraint operator $\overline{\Phi}_{(g,k)}$ introduced by Corvino--Huang \cite{CorvHuang2020}, with the normalization of Hirsch--Huang \cite[Definition~A.2]{HH2025} with $\varphi = 0$. In particular,
\[
    \overline{\Phi}_{(g,k)}(h,w) = \Phi(h,w) + \bigl(0,\; \tfrac12\,h \cdot j\bigr),
\]
where $j$ is fixed and $(h\cdot j)_i=g^{ab}h_{ia}j_b$. The operator $\overline{\Phi}_{(g,k)}$ is defined on a neighborhood of $(g,k)$ in $C^{m,\alpha}_{-q}(M \setminus \Omega) \times C^{m-1,\alpha}_{-1-q}(M \setminus \Omega)$ and takes values in $C^{m-2,\alpha}_{-2-q}(M \setminus \Omega)$.

By \cite[Proposition~5.3(2)]{HH2025}, the combined map
\[
    T(\tilde g, \tilde k) = \bigl(\overline{\Phi}_{(g,k)}(\tilde g, \tilde k),\;
    \B(\tilde g, \tilde k)\bigr)
\]
is locally surjective at $(g, k)$. Set $\rho := (1+r)^{-2-q_1}$ on $M\setminus\Omega$. For $\lambda>0$ small, local surjectivity produces $(\bar g,\bar k)$ satisfying
\begin{equation} \label{eq:ext-deform}
    \overline{\Phi}_{(g,k)}(\bar{g}, \bar{k})
    = \overline{\Phi}_{(g,k)}(g, k) + \lambda(\rho, 0),
    \qquad \B(\bar{g}, \bar{k}) = \mathcal{D},
\end{equation}
with perturbation size $O(\lambda)$ in $C^{m,\alpha}_{-q}\times C^{m-1,\alpha}_{-1-q}$; in particular $|\bar g-g|_g<1$ for $\lambda$ small. By \cite[Lemma~A.4]{HH2025}, applied with $\varphi=0$ and $W=0$,
\[
    \bar\mu - |\bar j|_{\bar g}
    \ge \mu-|j|_g+\lambda\rho
    \ge \lambda\rho
    =\lambda(1+r)^{-2-q_1},
\]
which proves (b), since $|\bar j|_{\bar g}=|\bar J|_{\bar g}$.

The two components of \eqref{eq:ext-deform} give
\[
    \bar\mu-\mu=\lambda\rho,
    \qquad
    \bar j-j=-\tfrac12(\bar g-g)\cdot j.
\]
Because $\rho\in C^{0,\alpha}_{-n-q_0}$, $j\in C^{0,\alpha}_{-n-q_0}$, and $\bar g-g\in C^{m,\alpha}_{-q}$, we have $\bar j\in C^{0,\alpha}_{-n-q_0}$. Raising the index with $\bar g$ gives the same decay for $\bar J$, proving (d). These formulas also show that the difference $(\bar\mu-\mu,\bar j-j)$ tends to zero in the weighted $L^1$ space required in \cite[Proposition~19]{EHLS}. Together with $(\bar g,\bar k)\to(g,k)$ in $C^{m,\alpha}_{-q}\times C^{m-1,\alpha}_{-1-q}$, \cite[Proposition~19]{EHLS} yields
\[
    \bigl|E(\bar g,\bar k)-E(g,k)\bigr|
    +\bigl|P(\bar g,\bar k)-P(g,k)\bigr|\longrightarrow0,
\]
which proves (c).

The $C^{m,\alpha}_{-q}\times C^{m-1,\alpha}_{-1-q}$ regularity of the deformation is part of the local-surjectivity conclusion in \cite[Proposition~5.3]{HH2025}.
\end{proof}

\subsection{Proof of Theorem~\ref{thm:strict-deform}}\label{sec:deform-pf}

\begin{proof}[Proof of Theorem~\ref{thm:strict-deform}]
Let $\epsilon > 0$ be given and $\lambda > 0$ a deformation parameter, not yet fixed.

First we deform the interior $(\Omega, g_-, k_-)$. Proposition~\ref{prop:strict-interior} applied to $(\Omega, g_-, k_-)$ produces a conformal deformation $\bar g_- = u^{4/(n-2)} g_-$, $\bar k_- = u^{2/(n-2)} k_-$ satisfying the strict dominant energy condition, whose Bartnik data differ from $\B(g_-, k_-)$ only in the mean curvature: $\bar H_- = H_- - \delta_H$, with $\bar\gamma_- = \gamma_-$, $\bar\omega_- = \omega_-$, and $\bar\tau_- = \tau_-$. By Proposition~\ref{prop:strict-interior}(e), $u \in C^{2,\alpha}(\Omega)$, so $\bar g_-$ and $\bar k_-$ are $C^{2,\alpha}$ on $\Omega$.

The function $\delta_H$ (the conformally induced change in mean curvature) is $C^{1,\alpha}(\Sigma)$ with $\|\delta_H\|_{C^{1,\alpha}(\Sigma)}=O(\lambda)$. This is sufficient for the target Bartnik data when $m=2$, but not for the arbitrary higher finite regularity allowed in Proposition~\ref{prop:exterior-matching}. We therefore use the following crude estimate which has the desired sign in the corner inequality. Since $\delta_H \ge 0$ on $\Sigma$, the constant
\begin{equation}
    \delta_{H,\max} := \max_\Sigma \delta_H = \|\delta_H\|_{C^0(\Sigma)}
\end{equation}
satisfies $\delta_{H,\max} \ge \delta_H$ pointwise on $\Sigma$, $\delta_{H,\max} \in C^\infty(\Sigma)$, and $\|\delta_{H,\max}\|_{C^{m,\alpha}(\Sigma)}=O(\lambda)$ for every $m\ge0$.

Next we deform the exterior $(M\setminus \Omega, g_+, k_+)$. Fix an integer $m\ge2$ for which $(g_+-\delta,k_+)\in C^{m,\alpha}_{-q}\times C^{m-1,\alpha}_{-1-q}$; in the default case, take $m=2$. Set
\begin{equation}
    \mathcal{D} := \bigl(\gamma_+,\; H_+ - \delta_{H,\max},\; \omega_+,\; \tau_+\bigr),
\end{equation}
which satisfies
\[
    \|\mathcal{D} - \B(g_+, k_+)\|_{\B^{m,\alpha}(\Sigma)} \le C_0\lambda
\]
for a fixed constant $C_0>0$. Set $\lambda_+:=C_0\lambda$. For $\lambda$ small, Proposition~\ref{prop:exterior-matching}, with $\lambda_+$ in place of its deformation parameter, produces exterior data $(\bar g_+,\bar k_+)$ satisfying
\[
    \B(\bar g_+,\bar k_+)=\mathcal D,
    \qquad
    \bar\mu_+-|\bar J_+|_{\bar g_+}
       \ge \lambda_+(1+r)^{-2-q_1}>0.
\]
For the fixed regularity order, the same proposition gives $(\bar g_+-\delta,\bar k_+)\in C^{m,\alpha}_{-q}\times C^{m-1,\alpha}_{-1-q}$.

We can then verify the conclusions. For (i), Proposition~\ref{prop:strict-interior}(b) gives
\[
    \bar\mu_- - |\bar J_-|_{\bar g_-}
    \ge \lambda u^{-(n+2)/(n-2)}>0
    \qquad\text{on }\Omega,
\]
and the exterior bound above holds on $M\setminus\Omega$. Thus both regions satisfy the strict dominant energy condition. For (ii), the induced metrics and tangential traces match, $\bar\gamma_-=\bar\gamma_+$ and $\bar\tau_-=\bar\tau_+$, and
\begin{equation}
\begin{aligned}
    \bar H_- - \bar H_+
      &= (H_- - H_+) + (\delta_{H,\max} - \delta_H)\\
      &\ge |\omega_- - \omega_+|_\gamma
       = |\bar\omega_- - \bar\omega_+|_\gamma.
\end{aligned}
\end{equation}
Here we used the original corner inequality and $\delta_{H,\max}-\delta_H\ge0$ on $\Sigma$. Thus the deformed Bartnik data satisfy \eqref{eq:corner-condition}. Finally, Proposition~\ref{prop:exterior-matching}(c) gives
\[
    |E(\bar g_+,\bar k_+)-E(g_+,k_+)|
    +|P(\bar g_+,\bar k_+)-P(g_+,k_+)|\longrightarrow0
\]
as $\lambda\to0$. Fixing $\lambda$ sufficiently small proves (iii).
\end{proof}

\begin{remark}
    The interior deformation of Theorem~\ref{thm:strict-deform} is closely related to density theorems for the DEC. In the smooth setting, Schoen--Yau promoted the dominant energy condition to its strict form in $n=3$ by conformal change \cite[Lemma~1]{SY81} (see the remark following \cite[Theorem~22]{EHLS} concerning an error in the proof of that lemma). Eichmair--Huang--Lee--Schoen did this in all dimensions $n \ge 3$ \cite[Theorem~18]{EHLS}, deforming the full data through the modified constraint operator, with mass control as in \cite[Proposition~19]{EHLS}. This improvability of the dominant energy scalar was studied further by Huang--Lee \cite{HL24}. The correction term that absorbs the first-order change of $|J|$ under such a deformation appears already at the linearized level in \cite[Lemma~20]{EHLS}, where it is chosen precisely so that the first-order variation of $|J|_{g}$ vanishes.

    Our interior step recovers the conformal route of \cite{SY81} and extends it to all dimensions $n \ge 3$ and to data with corners. Instead of linearizing the map $\varphi \mapsto |\bar J|_{\bar g}$, which may fail to be differentiable on $\{J=0\}$, we carry the gradient nonlinearity in equation~\eqref{eq:nonlinear-pde} itself and solve that equation directly.
\end{remark}

\section{Mollification of initial data with corners}\label{sec:mollification}

\begin{proposition}\label{prop:miao}
Let $(\Omega, \bar g_-, \bar k_-)$, $(M\setminus\Omega, \bar g_+, \bar k_+)$ be an initial data set with corners along $\Sigma$ satisfying the dominant energy condition on each side, whose Bartnik boundary data satisfy the corner condition \eqref{eq:corner-condition}. Suppose $\bar g_\pm$ extend $C^{2,\alpha}$ and $\bar k_\pm$ extend $C^{1,\alpha}$ up to $\Sigma$ on $\Omega$ and $\overline{M\setminus\Omega}$, respectively. For each $\delta > 0$ small, there exists an initial data set $(g_\delta, k_\delta)$ of class $C^{2,\alpha}\times C^{1,\alpha}$ on $M$ such that:
\begin{enumerate}[label=\textup{(\alph*)}]
    \item $(g_\delta, k_\delta)$ is obtained by mollifying
      $(\bar{g}_\pm, \bar{k}_\pm)$ in Gaussian normal coordinates within the
      $\delta$-tubular neighborhood $U_\delta$ of $\Sigma$,
    \item $(g_\delta, k_\delta) = (\bar{g}_\pm, \bar{k}_\pm)$ outside $U_\delta$,
    \item $(g_\delta,k_\delta)$ is asymptotically flat of type $(q,q_0,\alpha)$ and has
          exactly the same asymptotic regularity and decay of the energy and current densities as
          $(\bar g_+,\bar k_+)$,
    \item the dominant energy scalar satisfies
    \begin{equation} \label{eq:dec-scalar}
            (\mu_\delta - |J_\delta|_{g_\delta})_- = O(1) \quad \text{uniformly in } \delta,
    \end{equation}
    with $\spt\bigl((\mu_\delta - |J_\delta|_{g_\delta})_-\bigr) \subset U_\delta$ and
    \begin{equation} \label{eq:Ln2-est}
            \bigl\|(\mu_\delta - |J_\delta|_{g_\delta})_-\bigr\|_{L^{n/2}(M)} \le C\delta^{2/n} \to 0 \quad \text{as } \delta \to 0,
    \end{equation}
    \item $E(g_\delta, k_\delta) = E(\bar{g}_+, \bar{k}_+)$ and $P(g_\delta, k_\delta) = P(\bar{g}_+, \bar{k}_+)$ for $\delta>0$ small.
\end{enumerate}
\end{proposition}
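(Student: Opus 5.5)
Following Miao \cite{Miao02}, I will use Gaussian normal coordinates $(x,t)\in\Sigma\times(-2\delta_0,2\delta_0)$ about $\Sigma$, with $t$ the signed distance and $t>0$ in $M\setminus\Omega$. First I would arrange that the two metrics induce the same Gaussian structure. Since $\bar g_-|_{T\Sigma}=\bar g_+|_{T\Sigma}$, each side admits its own normal exponential map. Gluing these along $\Sigma$ gives a differentiable structure in which the glued metric is $g=dt^2+g_t$, where $g_t$ is a family of metrics on $\Sigma$ that is Lipschitz in $t$ and $C^{2,\alpha}$ on each side.

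Next I would write $k$ in these coordinates as well. Its components $k_{tt}$, $k_{ti}=\omega_i$ and $k_{ij}$ are $C^{1,\alpha}$ on each side and may jump at $t=0$. I then fix a mollifier $\phi$ in $t$ and a cutoff $\sigma(t/\delta)$, and define, as in Miao,
\[
g_\delta=dt^2+g_\delta(t),\qquad g_\delta(t)=\int g_{t-\delta^2\sigma(t/\delta)s}\,\phi(s)\,ds,
\]
with the same formula applied to each component of $k$. Tangential smoothing in $x$ is not needed. Properties (a)--(c) then follow directly: the modification is supported in $U_\delta$, the data are unchanged outside $U_\delta$, so the asymptotic end is untouched. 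Since the ADM quantities are computed at infinity, (e) follows as well.

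For (d), the metric part is Miao's computation: $g_\delta$ is $C^{1}$-close to the glued $g$ and has uniform $C^1$ bounds. Its scalar curvature satisfies
\[
R_{g_\delta}=O(1)+\tfrac{\partial}{\partial t}\bigl(\text{mollified }H\bigr)\text{-type term}.
\]
On the region $|t|\le\delta^2/100$ this gives $R_{g_\delta}\ge -C+\theta(t)(H_--H_+)\delta^{-2}$ for some $\theta\ge0$ of unit mass in $t/\delta^2$. Away from that inner layer, $R_{g_\delta}$ is bounded.

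For $k_\delta$, the terms $|k|^2$ and $(\tr k)^2$ are bounded because $k_\delta$ is bounded. In $J$, the divergence involves $\partial_t k_{t\cdot}$. The component $\partial_t k_{tt}$ enters $J_t$ and $\partial_t \omega$ enters $J_i$; both can be of size $\delta^{-2}$ on the inner layer. The trace term $\partial_t\tr k$ also enters $J_t$. Expanding $\dvg k-d\tr k$ in Gaussian coordinates, the $\partial_t k_{tt}$ contributions cancel against the $tt$-part of $\partial_t\tr k$. What remains in $J_t$ is $-\partial_t\tau$, which is $O(1)$ because $\tau_-=\tau_+$. The only singular part is therefore $J_i\approx\partial_t(\text{mollified }\omega_i)=\theta(t)\delta^{-2}(\omega_+-\omega_-)_i+O(1)$.

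Putting these together,
\[
\mu_\delta-|J_\delta|\ge -C+\tfrac12\theta\delta^{-2}\bigl(H_--H_+\bigr)\cdot(\text{Miao's constant, which is }1)-\theta\delta^{-2}|\omega_--\omega_+|_\gamma\ge -C
\]
by the corner condition. Here I need to check the normalization: $R$ carries $2\partial_tH$, and $\mu=\tfrac12R$, which produces exactly the coefficient matching $|\omega_--\omega_+|$. This gives $(\mu_\delta-|J_\delta|)_-=O(1)$, supported in $U_\delta$, and hence an $L^{n/2}$ norm of at most $C|U_\delta|^{2/n}\le C\delta^{2/n}$.

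I expect the main obstacle to be the bookkeeping of the $\delta^{-2}$ terms. First, I must verify that $\partial_t\tau$ really is the only surviving normal derivative in $J_t$. Second, the $O(\delta^{-1})$ cross terms from differentiating the cutoff $\sigma(t/\delta)$ must not break the sign. Following Miao, these are controlled because the mollified quantities differ from the glued ones by $O(\delta^2)$ in $C^0$ away from the inner layer.
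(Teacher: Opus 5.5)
Your proposal is correct and follows essentially the same route as the paper: Miao's variable-scale transverse mollification in Gaussian normal coordinates, applied to $g$ and to all components of $k$ with the same kernel. The remaining steps match as well. These are the singular term $(\bar H_- - \bar H_+)\Psi_\delta$ in $\mu_\delta$, and the cancellation of $\partial_t k_{tt}$ in $J_t$, which leaves $-\partial_t(\tr_{\Sigma_t}k_\delta)$ and is reduced to $O(1)$ by the $\tau$-matching. They also include the singular term $(\bar\omega_+-\bar\omega_-)\Psi_\delta$ in the tangential part of $J_\delta$, and finally H\"older's inequality on $U_\delta$.

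Some clean-ups are needed:
\begin{enumerate}
\item Your intermediate display is off by a factor of two. The singular part of $R_{g_\delta}$ is $2(\bar H_- - \bar H_+)\Psi_\delta$, so $\mu_\delta$ carries exactly $(\bar H_- - \bar H_+)\Psi_\delta$, as you then note.
\item There are no $O(\delta^{-1})$ cutoff terms. We have $\sigma_\delta'=O(\delta)$ and $\sigma_\delta''=O(1)$, and on $\delta/2\le|t|\le\delta$ the convolution window never meets $t=0$, so you can simply differentiate under the integral.
\item The estimate that actually matters is $g_\delta=\gamma+O(\delta^2)$ on $\spt\Psi_\delta$, not closeness away from the inner layer. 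It lets you replace $g_\delta$ by $\gamma$ in the coefficients of $\Psi_\delta$, producing $(\bar\tau_+-\bar\tau_-)\Psi_\delta$ and $|\bar\omega_--\bar\omega_+|_\gamma\Psi_\delta$ up to $O(1)$.
\end{enumerate}
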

 
\begin{proof}
We adapt the smoothing argument of Miao~\cite{Miao02} to initial data sets, performing the convolution in the transverse direction only while handling both $\bar g$ and $\bar k$. Fix a tubular neighborhood $U = \Sigma \times (-2\varepsilon, 2\varepsilon)$ of $\Sigma$ in which Gaussian normal coordinates with respect to $\Sigma$ are defined. In these coordinates the ambient data take the form
\[
    \bar g = \bar g_{\alpha\beta}(x,t)\,dx^\alpha dx^\beta + dt^2, \qquad \bar g_{\alpha\beta}(x,t) = \begin{cases} (\bar g_-)_{\alpha\beta}(x,t), & t < 0, \\ (\bar g_+)_{\alpha\beta}(x,t), & t \ge 0,\end{cases}
\]
with an analogous split $\bar k_{ij} = (\bar k_-)_{ij}$ for $t<0$ and $\bar k_{ij} = (\bar k_+)_{ij}$ for $t\ge 0$. The components $\bar g_{\alpha\beta}$ are $C^{2,\alpha}$ and the components $\bar k_{ij}$ are $C^{1,\alpha}$ up to $\Sigma$ from each side by hypothesis, and bounded on $\overline U$. The tangential metric components agree across $\Sigma$ because $\bar\gamma_-=\bar\gamma_+$, while only the tangential traces of $\bar k_-$ and $\bar k_+$ are required to agree. Thus the transverse derivatives of the metric, as well as $\bar k_{tt}$, $\bar k_{t\alpha}$ and the trace-free part of $\bar k_{\alpha\beta}$, may jump at $t=0$.
 
\medskip \noindent\textit{Step 1: Construction of the mollified data.} Let $\phi \in C^\infty_c((-1,1))$ be a nonnegative even function with $\int_\R\phi = 1$. Choose an even function $\sigma\in C^\infty_c((-1,1))$ such that $0\le\sigma\le1/100$ and $\sigma\equiv1/100$ on $[-1/2,1/2]$, and set
\[
    \sigma_\delta(t):=\delta^2\sigma(t/\delta).
\]
Then $\sigma_\delta=\delta^2/100$ for $|t|\le\delta/2$, $\sigma_\delta=0$ for $|t|\ge\delta$, and
\begin{equation}\label{eq:sigma-derivative-bounds}
    |\sigma_\delta'|\le C\delta,
    \qquad
    |\sigma_\delta''|\le C,
\end{equation}
with $C$ independent of $\delta$.

Define the smoothed tangential metric and smoothed symmetric tensor on $U_\delta := \Sigma \times (-\delta, \delta)$ by convolution in $t$ at the variable scale $\sigma_\delta(t)$:
\begin{equation}\label{eq:mollified-def}
    (g_\delta)_{\alpha\beta}(x,t) := \int_\R \bar g_{\alpha\beta}(x, t - \sigma_\delta(t) s)\,\phi(s)\,ds, \qquad (k_\delta)_{ij}(x,t) := \int_\R \bar k_{ij}(x, t - \sigma_\delta(t) s)\,\phi(s)\,ds,
\end{equation}
while keeping $(g_\delta)_{tt} \equiv 1$ and $(g_\delta)_{t\alpha} \equiv 0$. Extend by $(g_\delta, k_\delta) := (\bar g_\pm, \bar k_\pm)$ on $M \setminus U_\delta$. Since $\sigma$ is compactly supported in $(-1,1)$, the scale $\sigma_\delta$ vanishes on a neighborhood of $\{|t|=\delta\}$, so this extension has class $C^{2,\alpha}\times C^{1,\alpha}$. Properties (a)--(c) of the proposition follow directly.
 
Now set
\[
    \Psi_\delta(t) := \tfrac{100}{\delta^2}\,\phi\!\left(\tfrac{100\,t}{\delta^2}\right).
\]
This is a nonnegative approximate Dirac delta supported in $[-\delta^2/100,\delta^2/100]$, and $\int_\R\Psi_\delta=1$.

On $\spt\Psi_\delta$ we have $|t|\le\delta^2/100$ and $\sigma_\delta(t)=\delta^2/100$. Since the one-sided metrics are $C^1$ up to $\Sigma$ and have the same tangential value $\gamma$ there,
\begin{equation}\label{eq:core-metric-estimate}
    |g_\delta(x,t)-\gamma(x)|_\gamma
    +|g_\delta^{-1}(x,t)-\gamma^{-1}(x)|_\gamma
    \le C\delta^2
    \qquad\text{on }\spt\Psi_\delta.
\end{equation}
Because $\delta^2\Psi_\delta=O(1)$, replacing a $g_\delta$-contraction or norm by its $\gamma$ counterpart in a coefficient multiplying $\Psi_\delta$ changes the result only by $O(1)$.
 
\medskip \noindent\textit{Step 2: Energy density estimate.} Because $(g_\delta)_{tt}\equiv 1$ and $(g_\delta)_{t\alpha}\equiv 0$, the vector $\nu = \partial_t$ is the unit normal to every level set $\Sigma_t := \Sigma\times\{t\}$ for the whole family $g_\delta$. The foliation identity (see \cite[Equation~(3)]{Miao02})
\[
    R_{g_\delta} = R_{\Sigma_t} - \bigl(|A_\delta|^2_{g_\delta} + H_\delta^2\bigr) - 2\partial_t H_\delta
\]
expresses the ambient scalar curvature in terms of the intrinsic scalar curvature of $\Sigma_t$, its second fundamental form $A_\delta = \tfrac12\partial_t g_\delta|_{T\Sigma_t}$ (with sign convention matching $H=\dvg_\Sigma\nu$), mean curvature $H_\delta = g_\delta^{\alpha\beta}(A_\delta)_{\alpha\beta}$, and the transverse derivative $\partial_t H_\delta$. The first three terms involve only zeroth- and first-order transverse derivatives of $g_\delta$ together with tangential derivatives of $(g_\delta)_{\alpha\beta}$ of order at most two. Since convolution in $t$ commutes with tangential derivatives and $\bar g_\pm$ is $C^{2,\alpha}$ up to $\Sigma$ from each side, these contributions are uniformly bounded in $\delta$ by \cite[Lemmas~3.2, 3.3]{Miao02}. The singular behavior is localized in $\partial_t H_\delta$. On $|t|<\delta/2$, the scale is constant, and \cite[Equation~(30)]{Miao02} gives
\[
    \partial_t H_\delta(x,t) = O(1) + \bigl(\bar H_+(x) - \bar H_-(x)\bigr)\,\Psi_\delta(t)
\]
throughout that strip. On $\delta/2\le|t|\le\delta$, the convolution samples only one side of the corner. Differentiating twice under the integral gives terms involving $\partial_t^2\bar g\,(1-\sigma_\delta's)^2$ and $-\partial_t\bar g\,\sigma_\delta''s$, which are uniformly bounded by \eqref{eq:sigma-derivative-bounds}. Hence the same estimate holds on all of $U_\delta$, with $\Psi_\delta=0$ on the outer strip. Since $k_\delta$ is a convolution of a uniformly bounded tensor, $(\tr_{g_\delta}k_\delta)^2 - |k_\delta|_{g_\delta}^2$ is $O(1)$. Combining,
\begin{equation}\label{eq:mu-delta-est}
    \mu_\delta = O(1) + (\bar H_- - \bar H_+)\,\Psi_\delta(t).
\end{equation}
 
\medskip \noindent\textit{Step 3: Current density estimate.} We show
\begin{equation}\label{eq:J-delta-est}
    |J_\delta|_{g_\delta} \le |\bar\omega_- - \bar\omega_+|_{\gamma}\,\Psi_\delta(t) + O(1).
\end{equation}
Write $j_\delta:=(J_\delta)^{\flat_{g_\delta}}=\dvg_{g_\delta}k_\delta-d(\tr_{g_\delta}k_\delta)$. At a fixed $x\in\Sigma$, choose a $\gamma$-orthonormal tangential frame $(e_\alpha)_{\alpha=1}^{n-1}$ and extend it $t$-independently using the product identification. Since the Christoffel symbols for $g_\delta$ are uniformly bounded (first derivatives of the mollification of a Lipschitz metric) and $k_\delta$ itself is bounded, the contribution of the covariant-derivative correction $\Gamma\cdot k_\delta$ to $j_\delta$ is $O(1)$. Moreover, tangential derivatives $\partial_{x^\alpha}(k_\delta)_{ij}$ equal convolutions of $\partial_{x^\alpha}\bar k_{ij}$ and are therefore $O(1)$. The only potentially singular terms come from $\partial_t(k_\delta)_{ij}$.
 
On the strip $|t|<\delta/2$ we have $\sigma_\delta \equiv \delta^2/100$, so \eqref{eq:mollified-def} reduces to the standard $t$-convolution $k_\delta = \bar k *_t \Psi_\delta$. Splitting the integral at $\tau=0$ and integrating by parts, using $\bar k \in L^\infty$ with one-sided limits $\bar k^\pm$ at $\tau=0$ and bounded transverse derivatives on each side, we get
\begin{align}
    \partial_t(k_\delta)_{ij}(x,t)
        &= -\int_\R \bar k_{ij}(x,\tau)\,\partial_\tau\Psi_\delta(t-\tau)\,d\tau \notag\\
        &= \bigl(\bar k_{ij}^+(x) - \bar k_{ij}^-(x)\bigr)\,\Psi_\delta(t) + O(1). \label{eq:dt-kdelta-jump}
\end{align}

On the outer strip $\delta/2\le|t|\le\delta$ (say $t>0$; $t<0$ is symmetric), the argument $t-\sigma_\delta(t)s$ appearing in \eqref{eq:mollified-def} satisfies, for $s\in\spt(\phi)\subset[-1,1]$,
\[
    t-\sigma_\delta(t)s \in [t-\delta^2/100,\ t+\delta^2/100] \subset (\delta/2-\delta^2/100,\ 2\delta]
    \subset (0,2\delta]
\]
for $\delta$ small, so this range never meets $t=0$. Consequently $\bar k_{ij}(x,\cdot) = \bar k_{ij}^+(x,\cdot)$ is a single $C^{1,\alpha}$ function of $t$ throughout this range, and differentiating \eqref{eq:mollified-def} under the integral sign gives
\[
    \partial_t(k_\delta)_{ij}(x,t) = \int_\R \partial_\tau\bar k_{ij}^+(x,\tau)\Big|_{\tau=t-\sigma_\delta(t)s}
    \bigl(1-\sigma_\delta'(t)s\bigr)\,\phi(s)\,ds.
\]
Since $\bar k^+_{ij}$ has bounded transverse derivative uniformly on $(0,2\delta]$ and $|\sigma_\delta'(t)|\le C\delta$, $|s|\le1$ on $\spt(\phi)$, the factor $|1-\sigma_\delta'(t)s|\le1+C\delta=O(1)$, giving directly
\[
    \partial_t(k_\delta)_{ij}(x,t) = O(1) \quad\text{on } \delta/2\le|t|\le\delta,
\]
consistent with $\Psi_\delta\equiv0$ there (since $|100t/\delta^2|\ge50/\delta>1$, so the argument lies outside $\spt(\phi)$). By differentiating under the integral before any integration by parts, we avoid generating a $\sigma_\delta'$-related singular term, as no jump lies within the bounds of integration. Thus \eqref{eq:dt-kdelta-jump} holds on all of $U_\delta$.
 
We now assemble the components of $j_\delta$ in the frame $(\partial_t,e_\alpha)$. Using $(g_\delta)_{tt}\equiv 1$, $(g_\delta)_{t\alpha}\equiv 0$, the uniform boundedness of $\Gamma_{g_\delta}$, and the boundedness of $\partial_t g_\delta^{\alpha\beta}$, a direct computation yields
\[
    (j_\delta)_t = -\partial_t\bigl(\tr_{\Sigma_t}k_\delta\bigr) + O(1), \qquad
    (j_\delta)_\alpha = \partial_t(k_\delta)_{t\alpha} + O(1),
\]
where $\tr_{\Sigma_t}k_\delta := g_\delta^{\alpha\beta}(k_\delta)_{\alpha\beta}$. Applying \eqref{eq:dt-kdelta-jump}, the boundedness of $\partial_tg_\delta^{\alpha\beta}$, and \eqref{eq:core-metric-estimate}, we obtain
\[
    \partial_t(\tr_{\Sigma_t}k_\delta)
       =(\bar\tau_+-\bar\tau_-)\Psi_\delta+O(1).
\]
Thus the $\tau$-matching $\bar\tau_- = \bar\tau_+$ gives $(j_\delta)_t=O(1)$. Writing $c_\alpha := \bar k_{t\alpha}^+ - \bar k_{t\alpha}^-$, so that $\vec c := (c_\alpha)_\alpha$ satisfies $|\vec c|_\gamma = |\bar\omega_--\bar\omega_+|_\gamma$ (since $\bar\omega_\pm(e_\alpha) = \bar k_\pm(\partial_t,e_\alpha) = \bar k_{t\alpha}^\pm$ in this frame), the triangle inequality and \eqref{eq:core-metric-estimate} give
\[
    |(j_\delta)^\top|_{g_\delta}
    \le |\vec c|_{g_\delta}\,\Psi_\delta(t)+O(1)
    = |\bar\omega_--\bar\omega_+|_\gamma\,\Psi_\delta(t)+O(1).
\]
Since $(j_\delta)_t=O(1)$ and $|j_\delta|_{g_\delta}=|J_\delta|_{g_\delta}$, this yields \eqref{eq:J-delta-est}.

\medskip \noindent\textit{Step 4: DEC estimate and $L^{n/2}$ bound.} Combining \eqref{eq:mu-delta-est} and \eqref{eq:J-delta-est}, there is a constant $C>0$, independent of $\delta$, such that
\begin{equation}
    \mu_\delta - |J_\delta|_{g_\delta} \ge -C + \zeta\,\Psi_\delta(t), \qquad
    \zeta := (\bar H_- - \bar H_+) - |\bar\omega_- - \bar\omega_+|_{\gamma}.
\end{equation}
By the corner inequality, $\zeta \ge 0$ pointwise on $\Sigma$, so the singular term is nonnegative. Consequently,
\[
    \bigl(\mu_\delta - |J_\delta|_{g_\delta}\bigr)_- \le C,
\]
which is bounded in $L^\infty(M)$ uniformly in $\delta$. Outside $U_\delta$ the data are unchanged and the original data satisfy the DEC on each side, so the negative part is supported in $U_\delta$. Since $|U_\delta| \le C\delta$, H\"older's inequality gives
\[
    \bigl\|(\mu_\delta - |J_\delta|_{g_\delta})_-\bigr\|_{L^{n/2}(M)} \le \bigl\|(\mu_\delta - |J_\delta|_{g_\delta})_-\bigr\|_{L^\infty}\cdot|U_\delta|^{2/n} \le C'\delta^{2/n},
\]
establishing \eqref{eq:dec-scalar}--\eqref{eq:Ln2-est}.
 
\medskip \noindent\textit{Step 5: ADM equality.} The data $(g_\delta, k_\delta)$ agree with $(\bar g_+, \bar k_+)$ on $(M\setminus\Omega)\setminus U_\delta$, and the ADM integrals \eqref{eq:ADM-E}--\eqref{eq:ADM-P} are limits of integrals over coordinate spheres $S_r$ at infinity. For every $\delta$ small, $U_\delta$ is contained in a fixed compact set disjoint from the asymptotic region, so every coordinate sphere $S_r$ used in \eqref{eq:ADM-E}--\eqref{eq:ADM-P} lies outside $U_\delta$, and the integrands for $(g_\delta,k_\delta)$ and $(\bar g_+,\bar k_+)$ coincide identically on each $S_r$. Hence $E(g_\delta, k_\delta) = E(\bar g_+, \bar k_+)$ and $P(g_\delta, k_\delta) = P(\bar g_+, \bar k_+)$.
\end{proof}

\begin{remark}\label{rmk:ambient-omega}
An alternative formulation of the corner condition \eqref{eq:corner-condition}, used by Tsang in dimension three under the spacetime harmonic function approach \cite{Tsang22}, combines the $\tau$-matching and the tangential connection-$1$-form inequality into a single scalar inequality. Define the \emph{ambient connection $1$-form} $\omega^{\mathrm{amb}}(X) := \pi(\nu, X)$ on all of $TM|_\Sigma$ (where $\pi = k - (\tr_g k)g$). Then
\[
    |\omega^{\mathrm{amb}}_- - \omega^{\mathrm{amb}}_+|_g^2 = (\tau_- - \tau_+)^2 + |\omega_- - \omega_+|_\gamma^2,
\]
so the assumption that $\bar\tau_- = \bar\tau_+,\ \bar H_- - \bar H_+ \ge |\bar\omega_- - \bar\omega_+|_\gamma$ is strictly stronger than the single inequality $\bar H_- - \bar H_+ \ge |\bar\omega^{\mathrm{amb}}_- - \bar\omega^{\mathrm{amb}}_+|_{\bar g}$. But the mollification computation goes through under the weaker hypothesis, since in Step~3 the $\tau$-matching is invoked only to discard $(\bar\tau_- - \bar\tau_+)\Psi_\delta$ from $(J_\delta)_t$ before taking the norm, and without it one obtains $|J_\delta|_{g_\delta} \le |\bar\omega^{\mathrm{amb}}_- - \bar\omega^{\mathrm{amb}}_+|_{\bar g}\,\Psi_\delta + O(1)$ directly. We retain the Bartnik-component formulation in Theorem~\ref{thm:main} because it matches the structure of the deformation in Section~\ref{sec:strict}, which preserves $\bar\gamma$, $\bar\omega$, and $\bar\tau$ on $\Sigma$ and shifts only $\bar H$.
\end{remark}
 
\begin{remark}
The mollification estimate \eqref{eq:Ln2-est} depends on $\zeta$ only through its sign. As in Miao's general Riemannian corner setting $\bar H_-\ge\bar H_+$ \cite{Miao02}, a strict inequality produces a nonnegative delta contribution, while in the exactly matching case the coefficient vanishes. Here the spacetime coefficient is $\zeta=(\bar H_- - \bar H_+)-|\bar\omega_- - \bar\omega_+|_\gamma$. It may be positive, in which case $\zeta\Psi_\delta$ is unbounded as $\delta\to0$, but the estimate is unaffected because we control the negative part $(\mu_\delta-|J_\delta|_{g_\delta})_-$ rather than the full dominant energy scalar. Thus any nonnegative singular contribution drops out, and only the bounded $O(1)$ remainder enters the $L^{n/2}$ bound.
\end{remark}

When Proposition~\ref{prop:miao} is applied to the data produced in Section~\ref{sec:strict}, its DEC hypothesis holds strictly, and property~(b) leaves those data unchanged outside $U_\delta$. Consequently, the key estimates on $(g_\delta,k_\delta)$ outside $U_\delta$ are the following, where $a_0,c_0>0$ are the strict DEC gap constants of Step~1 in Section~\ref{sec:main-proof}:
\begin{itemize}[nosep]
    \item on $\Omega \setminus U_\delta$: $\mu_\delta - |J_\delta|_{g_\delta} = \bar\mu_- - |\bar J_-|_{\bar g_-} \ge a_0 > 0$,
    \item on $(M \setminus \Omega) \setminus U_\delta$: $\mu_\delta - |J_\delta|_{g_\delta} = \bar\mu_+ - |\bar J_+|_{\bar g_+} \ge c_0(1+r)^{-2-q_1}$.
\end{itemize}
These strict DEC gaps, inherited from the deformation in Section~\ref{sec:strict}, will be used to absorb the gradient term in the conformal correction in the next section.

\section{Conformal correction: regularity and the dominant energy condition}\label{sec:conformal-pde}

\subsection{General conformal DEC-restoration}\label{sec:generic-dec-restore}

Fix throughout this section
\begin{equation}\label{eq:p-choice}
    p > \frac{n}{1-\alpha}.
\end{equation}
By \eqref{eq:q1-choice}, $2+q_1>n+q_0$, so $(1+r)^{-2-q_1}\in C^{0,\alpha}_{-n-q_0}\cap L^1$. Moreover, $q_1<2q$ because $q_1\le q+(n-2)/2$ and $q>(n-2)/2$. Throughout this section, a \emph{$C^{2,\alpha}$ asymptotically flat initial data set of type $(q,q_0,\alpha)$} is a complete initial data set $(N,h,K)$ without boundary, with $h\in C^{2,\alpha}_{\mathrm{loc}}(N)$ and $K\in C^{1,\alpha}_{\mathrm{loc}}(N)$, whose end satisfies \eqref{eq:af-decay} and \eqref{eq:source-decay}.

The following proposition solves any linear conformal equation whose source $f$ only needs to dominate the negative part of the dominant energy scalar pointwise. 

\begin{proposition}[Conformal DEC restoration]\label{prop:generic-dec}
Let $(N,h,K)$ be a $C^{2,\alpha}$ asymptotically flat initial data set of type $(q,q_0,\alpha)$, let $W\Subset W'\Subset N$ be open sets with compact closure, and let $c_*,\Lambda_1>0$. Then there is an $\varepsilon_0=\varepsilon_0\bigl(n,p,q,q_0,q_1,\alpha,c_*,\Lambda_1,(N,h,K)\bigr)>0$ such that the following holds. Suppose $f\in C^{0,\alpha}(N)$ satisfies
\begin{enumerate}[label=\textup{(F\arabic*)}]
    \item $f\ge0$ on $N$ and $\spt(f)\subset\overline{W'}$;
    \item $\mu_h-|J_h|_h \ge -f + 1$ on $W$, and
          $\mu_h-|J_h|_h \ge c_*\,(1+r)^{-2-q_1}$ on $N\setminus W$;
    \item $\|f\|_{L^\infty(N)}\le\Lambda_1$ and
          $\|f\|_{L^{n/2}(N)} < \varepsilon_0$.
\end{enumerate}
Then
\begin{equation}\label{eq:generic-linear-pde}
    -c_n^{-1}\Delta_h u = f\,u, \qquad u\to1 \text{ at infinity},
\end{equation}
admits a unique positive solution $u\in C^{2,\alpha}_{\mathrm{loc}}(N)$. It satisfies $u\ge1$ and admits the expansion
\[
    u=1+A\,r^{2-n}+\omega, \qquad
    \omega=O_{2,\alpha}(r^{2-n-\eta})
\]
for some $\eta>0$. Moreover:
\begin{enumerate}[label=\textup{(\alph*)}]
    \item $u-1\in W^{2,p}_{-q}(N)$, with
          $\|u-1\|_{W^{2,p}_{-q}(N)}\le C\|f\|_{L^p(N)}$;
          in particular $1\le u\le1+C\|f\|_{L^p}$ and
          $|\nabla u|_h\le C\|f\|_{L^p}(1+r)^{-1-q}$;
    \item for $\widetilde h:=u^{4/(n-2)}h$ and
          $\widetilde K:=u^{2/(n-2)}K$, there is a constant $c'>0$ such that
          \[
              \widetilde\mu-|\widetilde J|_{\widetilde h}
              \ge c'(1+r)^{-2-q_1}\quad\text{on }N;
          \]
    \item $(n-2)\,\omega_{n-1}\,A = c_n\displaystyle\int_N f\,u\,dV_h$, so
          $|A|\le C\|f\|_{L^\infty(N)}\,\mathrm{Vol}_h(\overline{W'})$;
    \item $E(\widetilde h,\widetilde K)=E(h,K)+2A$ and
          $P(\widetilde h,\widetilde K)=P(h,K)$;
    \item $(\widetilde h,\widetilde K)$ is asymptotically flat of type
          $(q,q_0,\alpha)$. If $(h,K)$ is smooth outside a compact set, then so are
          $(\widetilde h,\widetilde K)$.
\end{enumerate}
\end{proposition}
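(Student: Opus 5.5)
Write $u=1+v$, so that \eqref{eq:generic-linear-pde} becomes
\[
    L v := -c_n^{-1}\Delta_h v - f v = f,
\]
and we look for $v\in W^{2,p}_{-q}(N)$.

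\textbf{Existence, uniqueness and the weighted estimate.} Since $-q\in(2-n,0)$ is a nonexceptional weight, $\Delta_h\colon W^{2,p}_{-q}\to L^p_{-2-q}$ is an isomorphism on an asymptotically flat manifold without boundary (Appendix~\ref{sec:weighted-spaces}). Because $f$ is compactly supported in $\overline{W'}$, multiplication by $f$ is bounded from $W^{2,p}_{-q}$ to $L^p_{-2-q}$. Its norm is controlled through Sobolev and H\"older on $\overline{W'}$: multiplication by $f$ maps $W^{1,2}$-type norms with constant $C\|f\|_{L^{n/2}}$, and combining this with $\|f\|_{L^\infty}\le\Lambda_1$ and interpolation gives a smallness that forces injectivity. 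Concretely, if $Lv=0$ with $v\in W^{2,p}_{-q}$, test against $v$. Since $v=O(r^{-q})$, $\nabla v=O(r^{-1-q})$ and $q>(n-2)/2$, the boundary terms vanish and we obtain
\[
    c_n^{-1}\int|\nabla v|^2=\int f v^2\le\|f\|_{L^{n/2}}\|v\|_{L^{2n/(n-2)}}^2\le C_S\|f\|_{L^{n/2}}\int|\nabla v|^2 .
\]
Hence $v=0$ once $\varepsilon_0<c_n^{-1}C_S^{-1}$. Since $L$ is a compact perturbation of an isomorphism, Fredholm theory then gives surjectivity, so a unique $v$ exists. An a priori estimate $\|v\|_{W^{2,p}_{-q}}\le C\|f\|_{L^p}$ with $C$ uniform over the admissible class of $f$ (bounded by $\Lambda_1$, supported in $\overline{W'}$, small in $L^{n/2}$) would follow from a contradiction/compactness argument. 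A cleaner route is to bound $\sup|v|$ via Moser iteration (or the $L^{n/2}$ smallness) and then apply interior $L^p$ estimates on $W'$ together with the weighted isomorphism for $\Delta_h v=-c_n f(1+v)$. Positivity and $u\ge1$ come from the maximum principle. On $\{u<0\}$ the function $u$ would be superharmonic where $f>0$, so first $u>0$ by the energy argument applied to $u_-$, which vanishes near infinity. Then $\Delta_h u=-c_nfu\le0$ with $u\to1$ gives $u\ge1$. Schauder theory with $f\in C^{0,\alpha}$ upgrades to $u\in C^{2,\alpha}_{\rm loc}$, and the weighted embedding gives the pointwise bounds in (a).

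\textbf{Expansion, (c) and (d).} Outside $\overline{W'}$, $u$ is $h$-harmonic and tends to $1$. The standard harmonic expansion on an AF end (using $h-\delta\in C^{2,\alpha}_{-q}$) gives $u=1+Ar^{2-n}+O_{2,\alpha}(r^{2-n-\eta})$. Integrating $\Delta_h u$ over large balls gives the flux identity $-(n-2)\omega_{n-1}A=\int\Delta_h u=-c_n\int fu$, which is (c). The bound on $A$ follows from $u\le C$. For (d), the conformal mass formula $E(u^{4/(n-2)}h)=E(h)+2A$ is the usual computation with our normalization \eqref{eq:ADM-E}. For $P$, the quantity $u^{2/(n-2)}K-\tr(\cdot)\,u^{4/(n-2)}h$ differs from the original integrand by $O(r^{2-n})\cdot O(r^{-1-q})$, whose integral over $S_r$ tends to $0$. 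Part (e) follows from the decay of $u-1$ together with the formulas for $\widetilde\mu,\widetilde J$ below, which show they decay like $\mu,J$ up to lower-order terms; smoothness outside a compact set follows from elliptic regularity, since $f=0$ there.

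\textbf{DEC, part (b): the main obstacle.} Use the conformal inequality \eqref{eq:conf-ineq} from the Appendix. Evaluated with $\Delta_h u=-c_nfu$, it reads
\[
    \widetilde\mu-|\widetilde J|_{\widetilde h}\ge u^{-4/(n-2)}\bigl(\mu-|J|+f-c_n^{-1}u^{-1}|K|\,|\nabla u|\bigr).
\]
On $W$, (F2) gives $\mu-|J|+f\ge1$, and the gradient term is at most $C\|f\|_{L^p}\le C\Lambda_1^{1-n/(2p)}\varepsilon_0^{n/(2p)}$, which is at most $1/2$ for small $\varepsilon_0$. On $N\setminus W$ we have $f\ge0$ and $\mu-|J|\ge c_*(1+r)^{-2-q_1}$, while $|K||\nabla u|\le C\|f\|_{L^p}(1+r)^{-2-2q}$. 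Since $q_1<2q$, this term is dominated by $\tfrac{c_*}{2}(1+r)^{-2-q_1}$ everywhere once $\varepsilon_0$ is small. The delicate point is making the constant in (a) uniform over the class in (F3) so that $\varepsilon_0$ depends only on the listed quantities; that is where the compactness/Sobolev argument above must be done carefully. With $u\le C$ this yields $c'>0$ in (b).
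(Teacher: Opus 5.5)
Your proof is correct in substance, but the existence step follows a different route from the paper's. The paper does not set up a Fredholm problem. It invokes the Schoen--Yau lemma (quoted as Lemma~4.1 of Miao), which, under the $L^{n/2}$-smallness of $f$, directly supplies a positive solution tending to $1$ together with its $r^{2-n}$ expansion. Then $u\ge1$ follows from the minimum principle, uniqueness follows from the same Dirichlet-energy/Sobolev argument you use for injectivity, and $u-1\in W^{2,p}_{-q}$ is read off from the expansion.

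Your route combines Bartnik's isomorphism $\Delta_h\colon W^{2,p}_{-q}\to L^p_{-2-q}$ (from Bartnik's paper; the Appendix only defines the spaces), compactness of $v\mapsto fv$, and injectivity from smallness. It is self-contained and gives the weighted membership by construction, but it requires a separate positivity argument. Your energy argument on $u_-$ supplies this correctly. Note, however, that on $\{u<0\}$ one has $\Delta_hu=-c_nfu\ge0$, so $u$ is subharmonic there, not superharmonic. A maximum principle alone gives no contradiction; it is the smallness of $\|f\|_{L^{n/2}}$ in the energy argument that does the work.

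Uniqueness is asserted among all positive $C^{2,\alpha}_{\mathrm{loc}}$ solutions tending to $1$, not only those in your weighted class. You should therefore note, as the paper does, that any such solution is $h$-harmonic near infinity. Hence it satisfies $u-1=O(r^{2-n})$ and $|\nabla u|_h=O(r^{1-n})$, and so lies in $W^{2,p}_{-q}$, where your injectivity applies.

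The step you flag as delicate, the uniform bound $\|u-1\|_{W^{2,p}_{-q}}\le C\|f\|_{L^p}$, needs neither compactness nor Moser iteration. Set $m_0:=\sup|v|$. Since $\spt f\subset\overline{W'}$, the weighted isomorphism applied to $\Delta_hv=-c_nf(1+v)$ gives $\|v\|_{W^{2,p}_{-q}}\le C(1+m_0)\|f\|_{L^p}$. The weighted Morrey embedding then gives $m_0\le C''(1+m_0)\|f\|_{L^p}$, with $C''$ independent of $f$. Since $\|f\|_{L^p}\le\Lambda_1^{1-n/(2p)}\varepsilon_0^{n/(2p)}$, shrinking $\varepsilon_0$ lets you absorb the $m_0$ term. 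This yields (a) with a uniform constant and makes the dependence of $\varepsilon_0$ transparent. This two-line absorption is exactly the paper's argument. With it in place, your treatment of (b)--(e) coincides with the paper's.
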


\begin{proof}
First note that
\[
\|f\|_{L^p(N)}
\le\|f\|_{L^\infty}^{1-n/(2p)}\|f\|_{L^{n/2}}^{n/(2p)}
\le\Lambda_1^{1-n/(2p)}\varepsilon_0^{n/(2p)},
\]
so every smallness requirement on $\|f\|_{L^p}$ below follows from taking $\varepsilon_0$ small.

\medskip

\textit{Existence and asymptotics.} Write \eqref{eq:generic-linear-pde} as $\Delta_hu+au=0$, $a:=c_nf\ge0$. By (F3) and \cite[Lemma~4.1]{Miao02} (due to Schoen--Yau \cite{SY79}, and the argument holds for all $n\ge3$), for $\varepsilon_0$ small enough there is a positive $C^2$ solution tending to $1$ at infinity. The same lemma gives
\[
    u=1+Ar^{2-n}+\omega,
    \qquad
    \omega=O(r^{1-n}),\quad \partial\omega=O(r^{-n}).
\]
Local Schauder estimates upgrade $u$ to $C^{2,\alpha}_{\mathrm{loc}}$. Since $f$ has compact support, $u$ is $h$-harmonic near infinity. Applying the exterior Schauder estimate \cite[Theorem~1]{Meyers63} to the equation satisfied by $\omega=u-1-Ar^{2-n}$ gives
\[
    \omega=O_{2,\alpha}(r^{2-n-\eta})
\]
for some $\eta>0$. In particular, $\nabla u=O_{1,\alpha}(r^{1-n})$.

\medskip

\textit{Lower bound.} Since $\Delta_hu=-au\le0$ and $u\to1$ at infinity, the minimum principle gives $u\ge1$.

\medskip

\textit{Uniqueness.} If $u_1,u_2$ are positive solutions of \eqref{eq:generic-linear-pde} with $u_i\to1$, set $v=u_1-u_2$. Then $\Delta_hv=-av$, and $v$ is harmonic near infinity with $v\to0$. The expansion just established gives $v=O(r^{2-n})$ and $|\nabla v|_h=O(r^{1-n})$. In particular, $v=O(r^{-q})$ and $|\nabla v|_h=O(r^{-1-q})$ since $q<n-2$. Exhausting $N$ by coordinate balls $B_R$, the boundary term satisfies
\[
    \left|\int_{\partial B_R}v\,\partial_\nu v\,dA_h\right|
    \le CR^{n-2-2q}\longrightarrow0,
\]
because $q>(n-2)/2$. Hence
\[
    \int_N|\nabla v|_h^2\,dV_h
    =\int_Nav^2\,dV_h
    \le C\|f\|_{L^{n/2}}
          \|v\|_{L^{2n/(n-2)}}^2
    \le C\|f\|_{L^{n/2}}\int_N|\nabla v|_h^2\,dV_h.
\]
For $\varepsilon_0$ small, $v\equiv0$.

\medskip

\textit{Part (a).} Set $v:=u-1\ge0$. Then $\Delta_hv=-a(v+1)=:G$, with $\spt(G)\subset\overline{W'}$. If $m_0:=\|v\|_{L^\infty}$, then $\|G\|_{L^p}\le c_n(1+m_0)\|f\|_{L^p}$. The expansion above and $q<n-2$ show that $v\in W^{2,p}_{-q}$. Since $2-n<-q<0$, Bartnik's weighted Laplacian isomorphism \cite[Proposition~2.2]{Bartnik86} and $\spt(G)\subset\overline{W'}$ give
\[
    \|v\|_{W^{2,p}_{-q}}
    \le C\|G\|_{L^p_{-q-2}}
    \le C'(1+m_0)\|f\|_{L^p}.
\]
The weighted Morrey embedding $W^{2,p}_{-q}\hookrightarrow C^{1,\alpha}_{-q}$ then gives
\[
    m_0\le C''(1+m_0)\|f\|_{L^p}.
\]
After decreasing $\varepsilon_0$, the last inequality can be absorbed, yielding the asserted estimates.

\medskip

\textit{Part (b).} By \eqref{eq:conf-ineq} and \eqref{eq:generic-linear-pde},
\[
    \widetilde\mu-|\widetilde J|_{\widetilde h}
    \ge u^{-4/(n-2)}\Bigl((\mu_h-|J_h|_h)+f-\mathcal E\Bigr),
    \qquad
    \mathcal E:=c_n^{-1}u^{-1}|K|_h|\nabla u|_h.
\]
On $W$, (F2) and part (a) give $(\mu_h-|J_h|_h)+f-\mathcal E\ge1-C\|f\|_{L^p}>0$. On $N\setminus W$,
\[
    \mathcal E\le C\|f\|_{L^p}(1+r)^{-2-2q}
    \le C\|f\|_{L^p}(1+r)^{-2-q_1},
\]
where the last inequality follows from $q_1<2q$. Thus
\[
    (\mu_h-|J_h|_h)+f-\mathcal E
    \ge(c_*-C\|f\|_{L^p})(1+r)^{-2-q_1}
\]
on $N\setminus W$. Since $W$ is precompact and $u$ is bounded above, the two region estimates combine, after decreasing $\varepsilon_0$, to give the asserted global constant $c'>0$.

\medskip

\textit{Parts (c)--(d).} The expansion of $u$ and the computation in \cite[Equations~(8)--(9)]{EHLS} give $E(\widetilde h,\widetilde K)=E(h,K)+2A$, while the decay of $u-1$ shows directly from \eqref{eq:ADM-P} that $P(\widetilde h,\widetilde K)=P(h,K)$. Fix $R$ with $\overline{W'}\subset B_R$. Integrating \eqref{eq:generic-linear-pde} over $B_{R'}$, $R'>R$, and then letting $R'\to\infty$ gives
\[
    (n-2)\omega_{n-1}A=c_n\int_Nfu\,dV_h.
\]
The stated bound for $A$ follows from the upper bound for $u$ in part (a).

\medskip

\textit{Part (e).} Since $q<n-2$ and $u-1=Ar^{2-n}+O_{2,\alpha}(r^{2-n-\eta})$, the conformally transformed metric and tensor satisfy
\[
    \widetilde h-\delta\in C^{2,\alpha}_{-q},
    \qquad
    \widetilde K\in C^{1,\alpha}_{-1-q}.
\]
On the complement of $\overline{W'}$, where $\Delta_hu=0$, the conformal formulas give
\[
    \widetilde\mu=u^{-4/(n-2)}\mu_h,
    \qquad
    \widetilde J=u^{-6/(n-2)}
       \Bigl(J_h+c_n^{-1}u^{-1}K^\#(\nabla u)\Bigr).
\]
Here $K^\#(\nabla u)=O_{0,\alpha}(r^{-n-q})$, because $K=O_{1,\alpha}(r^{-1-q})$ and $\nabla u=O_{1,\alpha}(r^{1-n})$. Since $q>q_0$, this term belongs to $C^{0,\alpha}_{-n-q_0}$. Together with \eqref{eq:source-decay}, this proves the required decay of the energy and current densities for $(\widetilde h,\widetilde K)$. Finally, if $(h,K)$ is smooth outside a compact set, then $u$ is smooth there by elliptic bootstrapping, because $f$ is compactly supported, and thus the transformed data are also smooth outside a compact set.
\end{proof}

\subsection{Restoring the dominant energy condition}\label{sec:application-dec}

Here we apply Proposition~\ref{prop:generic-dec} to the mollified data. Recall from Section~\ref{sec:mollification} that the mollified data satisfy $(\mu_\delta-|J_\delta|_{g_\delta})_-\le C$ with support in $U_\delta$ and $\|(\mu_\delta-|J_\delta|_{g_\delta})_-\|_{L^{n/2}(M)}\le C\delta^{2/n}$. To force positivity on $U_\delta$, we dominate the negative part of the dominant energy scalar with a smooth function. Let $\chi_\delta\in C^\infty_c(U_{2\delta})$ with $\chi_\delta\equiv1$ on $U_\delta$, $0\le\chi_\delta\le1$, and set
\begin{equation}\label{eq:f-delta-smooth}
    f_\delta := \Lambda_\delta\,\chi_\delta, \qquad
    \Lambda_\delta := \bigl\|(\mu_\delta-|J_\delta|_{g_\delta})_-\bigr\|_{L^\infty(M)} + 1,
\end{equation}
so that $f_\delta\in C^\infty(M)$, $f_\delta\ge(\mu_\delta-|J_\delta|_{g_\delta})_-+1$ on $U_\delta$, $f_\delta\ge(\mu_\delta-|J_\delta|_{g_\delta})_-$ pointwise and $\spt(f_\delta)\subset U_{2\delta}$. Moreover, $\Lambda_\delta\le\Lambda$ uniformly by \eqref{eq:dec-scalar}, and hence
\[
    \|f_\delta\|_{L^{n/2}(M)}
    \le \Lambda\,|U_{2\delta}|^{2/n}
    \le C\delta^{2/n}\longrightarrow0.
\]

\begin{corollary}\label{cor:app-dec}
For $\delta$ sufficiently small, Proposition~\ref{prop:generic-dec} applies with $(N,h,K)=(M,g_\delta,k_\delta)$, $f=f_\delta$, $W=U_\delta$, $W'=U_{2\delta}$, $c_*=\min(a_0,c_0)$, and $\Lambda_1=\Lambda$, yielding $u_\delta$ and
\[
    (\widetilde g_\delta,\widetilde k_\delta) := \bigl(u_\delta^{4/(n-2)}g_\delta,\; u_\delta^{2/(n-2)}k_\delta\bigr).
\]
The resulting data have class $C^{2,\alpha}\times C^{1,\alpha}$, are asymptotically flat of type $(q,q_0,\alpha)$, and satisfy
\[
    \widetilde\mu_\delta-|\widetilde J_\delta|_{\widetilde g_\delta}
    \ge c_\delta(1+r)^{-2-q_1}
\]
for some $c_\delta>0$. Moreover,
\[
    E(\widetilde g_\delta,\widetilde k_\delta)=E(\bar g_+,\bar k_+)+O(\delta),
    \qquad P(\widetilde g_\delta,\widetilde k_\delta)=P(\bar g_+,\bar k_+).
\]
\end{corollary}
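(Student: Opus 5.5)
The plan is to verify hypotheses (F1)--(F3) of Proposition~\ref{prop:generic-dec} for the mollified data $(M,g_\delta,k_\delta)$ of Proposition~\ref{prop:miao}, with $f=f_\delta$ from \eqref{eq:f-delta-smooth}, and then read off the conclusions. First, $(M,g_\delta,k_\delta)$ is a complete $C^{2,\alpha}\times C^{1,\alpha}$ initial data set without boundary, and it is asymptotically flat of type $(q,q_0,\alpha)$ by Proposition~\ref{prop:miao}(c). The open sets $W=U_\delta\Subset W'=U_{2\delta}$ are precompact. A caveat: $\varepsilon_0$ depends on $(N,h,K)$, which here varies with $\delta$. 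To handle this, I will note that the constants entering $\varepsilon_0$ (the Sobolev constant, the Bartnik isomorphism constant, the Morrey constant, $\sup|K|$, and the Schauder constants) are uniform in $\delta$. This holds because $g_\delta$ is uniformly bi-Lipschitz to a fixed background metric, agrees with $\bar g_\pm$ outside a fixed compact set, and has uniformly bounded Christoffel symbols and $k_\delta$. I would state this uniformity explicitly, re-running the steps of the proof of Proposition~\ref{prop:generic-dec} with constants depending only on these bounds.

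(F1) is immediate: $f_\delta\ge0$ and $\spt f_\delta\subset U_{2\delta}$. For (F2), on $U_\delta$ we have $\chi_\delta=1$, so $f_\delta=\Lambda_\delta\ge(\mu_\delta-|J_\delta|)_-+1$, which gives $\mu_\delta-|J_\delta|\ge-f_\delta+1$. On $M\setminus U_\delta$, property (b) of Proposition~\ref{prop:miao} together with the strict gaps listed after it give $\mu_\delta-|J_\delta|\ge a_0$ on $\Omega\setminus U_\delta$ and $\ge c_0(1+r)^{-2-q_1}$ on the exterior. Since $r$ is bounded on the compact set $\Omega$, we get $a_0\ge a_0(1+r)^{-2-q_1}$, so $c_*=\min(a_0,c_0)$ works. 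For (F3), $\|f_\delta\|_{L^\infty}=\Lambda_\delta\le\Lambda$ by \eqref{eq:dec-scalar}, and $\|f_\delta\|_{L^{n/2}}\le C\delta^{2/n}<\varepsilon_0$ for small $\delta$.

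Proposition~\ref{prop:generic-dec} then produces $u_\delta$. Part (b) gives the strict DEC with some $c_\delta=c'>0$, and part (e) gives asymptotic flatness. Regularity $C^{2,\alpha}\times C^{1,\alpha}$ follows from $u_\delta\in C^{2,\alpha}_{\mathrm{loc}}$. By part (d), $P$ is unchanged and $E(\widetilde g_\delta,\widetilde k_\delta)=E(g_\delta,k_\delta)+2A_\delta$. By part (c), $|A_\delta|\le C\Lambda\,\mathrm{Vol}(U_{2\delta})\le C\delta$, since the volume of a tubular neighborhood is $O(\delta)$ with metrics uniformly equivalent. Finally, Proposition~\ref{prop:miao}(e) identifies $E(g_\delta,k_\delta)=E(\bar g_+,\bar k_+)$ and likewise for $P$. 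The main obstacle is the $\delta$-uniformity of $\varepsilon_0$ and of the constant in (c). If that is hard to justify, a fallback is to fix a smooth background metric $g_0$ that agrees with $\bar g_+$ near infinity, and to compare the Sobolev and weighted-Laplacian constants of $g_\delta$ against those of $g_0$ via the uniform $C^1$ bounds.
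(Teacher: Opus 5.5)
Your proposal is correct and follows the paper's proof essentially step for step. You verify (F1)--(F3) exactly as the paper does. You justify the $\delta$-independence of $\varepsilon_0$ by the same observation: the $g_\delta$ agree with $\bar g_\pm$ outside a fixed compact set, are uniformly equivalent to a fixed metric with uniformly bounded first derivatives, and $\sup|k_\delta|$ is bounded. You then read off the conclusions from Proposition~\ref{prop:generic-dec}(b)--(e), Proposition~\ref{prop:miao}(c),(e), and $\mathrm{Vol}(U_{2\delta})=O(\delta)$.

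One cosmetic point: $a_0\ge a_0(1+r)^{-2-q_1}$ holds simply because $r\ge1$ on all of $M$, not because $r$ is bounded on $\Omega$.
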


\begin{proof}
All that remains is to check the conditions needed to apply Proposition~\ref{prop:generic-dec}. Condition (F1) holds by construction of $f_\delta$. For (F2), we check the two regions: on $W=U_\delta$, \eqref{eq:f-delta-smooth} gives $f_\delta\ge(\mu_\delta-|J_\delta|_{g_\delta})_-+1$, so $\mu_\delta-|J_\delta|_{g_\delta}\ge-(\mu_\delta-|J_\delta|_{g_\delta})_-\ge-f_\delta+1$. On $M\setminus U_\delta$, the strict DEC estimates inherited from Section~\ref{sec:strict} give $\mu_\delta-|J_\delta|_{g_\delta}=\bar\mu_--|\bar J_-|_{\bar g_-}\ge a_0$ on $\Omega\setminus U_\delta$ and $\mu_\delta-|J_\delta|_{g_\delta}=\bar\mu_+-|\bar J_+|_{\bar g_+}\ge c_0(1+r)^{-2-q_1}$ on $(M\setminus\Omega)\setminus U_\delta$. Therefore $\mu_\delta-|J_\delta|_{g_\delta}\ge c_*(1+r)^{-2-q_1}$, where $c_*=\min(a_0,c_0)$, using $(1+r)^{-2-q_1}\le1$ on the compact region $\Omega\setminus U_\delta$.

Finally, for (F3), we have that $\|f_\delta\|_{L^\infty}=\Lambda_\delta\le\Lambda_1$ uniformly in $\delta$ by \eqref{eq:dec-scalar}, and $\|f_\delta\|_{L^{n/2}}\le C\delta^{2/n}\to0$, while $\varepsilon_0$ may be taken independent of $\delta$: the family $(g_\delta,k_\delta)$ agrees with $(\bar g_\pm,\bar k_\pm)$ outside a fixed compact set, is uniformly equivalent to a fixed smooth metric, and has uniformly bounded first derivatives. Thus the Sobolev and elliptic constants used in Proposition~\ref{prop:generic-dec}, as well as $\sup_M|k_\delta|$, can be chosen independently of $\delta$. Proposition~\ref{prop:miao} gives $E(g_\delta,k_\delta)=E(\bar g_+,\bar k_+)$, $P(g_\delta,k_\delta)=P(\bar g_+,\bar k_+)$ exactly. 

Combining with Proposition~\ref{prop:generic-dec}(c)--(d), $\mathrm{Vol}(U_{2\delta})=O(\delta)$, and $\|f_\delta\|_{L^\infty}=O(1)$ gives $A_\delta=O(\delta)$, hence the desired mass estimate. The remaining assertions are Proposition~\ref{prop:generic-dec}(b),(e), together with Proposition~\ref{prop:miao}(c).
\end{proof}

\section{Proof of Theorem~\ref{thm:main} and equality case remarks}\label{sec:main-proof}

\begin{proof}[Proof of Theorem~\ref{thm:main}]
Fix $\epsilon>0$.

\medskip\noindent\textit{Step 1: Strict DEC.} By Theorem~\ref{thm:strict-deform} applied with $\epsilon/2$, there is an initial data set with corners $(\bar g_\pm,\bar k_\pm)$ along $\Sigma$ satisfying the strict dominant energy condition in each region, whose Bartnik data satisfy the corner condition \eqref{eq:corner-condition}, with $|E(\bar g_+,\bar k_+)-E(g_+,k_+)|+|P(\bar g_+,\bar k_+)-P(g_+,k_+)|<\epsilon/2$, and with the interior data $C^{2,\alpha}$ up to $\Sigma$ (Proposition~\ref{prop:strict-interior}(e)). By Proposition~\ref{prop:strict-interior}(b) and Lemma~\ref{lem:apriori}, the interior strict gap satisfies $\bar\mu_--|\bar J_-|_{\bar g_-}\ge a_0>0$ on $\Omega$, and by Proposition~\ref{prop:exterior-matching}(b) the exterior gap satisfies $\bar\mu_+-|\bar J_+|_{\bar g_+}\ge c_0(1+r)^{-2-q_1}>0$ on $M\setminus\Omega$, for constants $a_0,c_0>0$ depending only on the fixed choice of $\epsilon$, independent of $\delta$.

\medskip\noindent\textit{Step 2: Mollification.} Apply Proposition~\ref{prop:miao} to $(\bar g_\pm,\bar k_\pm)$. Its hypotheses hold: Theorem~\ref{thm:strict-deform}(i)--(ii) gives the strict DEC in each region and the corner condition; the interior metric and tensor have class $C^{2,\alpha}\times C^{1,\alpha}$ up to $\Sigma$ by Proposition~\ref{prop:strict-interior}(e); and the exterior metric and tensor have class $C^{2,\alpha}\times C^{1,\alpha}$ by Proposition~\ref{prop:exterior-matching}. For each small $\delta>0$ this yields an initial data set $(g_\delta,k_\delta)$ of class $C^{2,\alpha}\times C^{1,\alpha}$ on $M$ agreeing with $(\bar g_\pm,\bar k_\pm)$ outside $U_\delta$, in particular retaining the strict DEC gaps of Step~1 there, with $\|(\mu_\delta-|J_\delta|_{g_\delta})_-\|_{L^{n/2}(M)}\le C\delta^{2/n}\to0$, and with $E(g_\delta,k_\delta)=E(\bar g_+,\bar k_+)$, $P(g_\delta,k_\delta)=P(\bar g_+,\bar k_+)$ exactly.

\medskip\noindent\textit{Step 3: Conformal correction.} By Corollary~\ref{cor:app-dec}, for $\delta$ small enough there is $(\widetilde g_\delta,\widetilde k_\delta)$ of class $C^{2,\alpha}\times C^{1,\alpha}$ on $M$, satisfying the dominant energy condition everywhere, with
\[
    E(\widetilde g_\delta,\widetilde k_\delta)=E(\bar g_+,\bar k_+)+O(\delta), \qquad
    P(\widetilde g_\delta,\widetilde k_\delta)=P(\bar g_+,\bar k_+).
\]
Fix $\delta$ small enough that the $O(\delta)$ term is below $\epsilon/2$.

\medskip\noindent\textit{Step 4: Applying the spacetime PMT.} The data $(\widetilde g_\delta,\widetilde k_\delta)$ are complete, asymptotically flat, and satisfy the dominant energy condition on $M$. For $3\le n\le7$, choose $\widehat q$ with $\frac{n-2}{2}<\widehat q<q$. The weighted H\"older decay of the corrected data then implies the weighted Sobolev hypotheses of \cite[Definition~3]{EHLS}, so \cite[Theorem~1]{EHLS} gives $E(\widetilde g_\delta,\widetilde k_\delta)\ge |P(\widetilde g_\delta,\widetilde k_\delta)|$ in these dimensions. For $n\ge8$, we further assume that $g_+-\delta\in C^{m,\alpha}_{-q}$ and $k_+\in C^{m-1,\alpha}_{-1-q}$ for every integer $m\ge2$, so that the harmonic asymptotic data arising from the higher-regularity EHLS reduction satisfy the asymptotic hypotheses of \cite[Theorem~1.1]{BrendleWang}. The reduction of $E\ge |P|$ to $E\ge0$ described in the final remark of Section~6 of \cite{EHLS}, followed by a further application of \cite[Theorem~18]{EHLS} and compactly supported smoothing preserving strict DEC, rules out $E<|P|$ by \cite[Theorem~1.1]{BrendleWang}; cf.~\cite[Theorem~4.4]{HKLZ}. Thus $E(\widetilde g_\delta,\widetilde k_\delta)\ge |P(\widetilde g_\delta,\widetilde k_\delta)|$ in every dimension $n\ge3$.

By Steps~1 and~3,
\[
    |E(\widetilde g_\delta,\widetilde k_\delta)-E(g_+,k_+)|<\epsilon,
    \qquad
    |P(\widetilde g_\delta,\widetilde k_\delta)-P(g_+,k_+)|<\epsilon.
\]
Hence
\[
    E(g_+,k_+) > E(\widetilde g_\delta,\widetilde k_\delta)-\epsilon
    \ge |P(\widetilde g_\delta,\widetilde k_\delta)|-\epsilon
    > |P(g_+,k_+)|-2\epsilon.
\]
Since $\epsilon>0$ was arbitrary, $E(g_+,k_+)\ge|P(g_+,k_+)|$.
\end{proof}

\medskip

\noindent\textbf{The equality case.} It is natural in this context to ask what equality $E=|P|$ forces in Theorem~\ref{thm:main}. In the smooth setting without corners, Huang--Lee proved, under their asymptotic hypotheses, that $E=|P|$ implies $E=|P|=0$ \cite{HL20}, and completed the rigidity by showing that such data arise from an isometric embedding into Minkowski space \cite{HL25}. They consider a \emph{lapse--shift pair} $(f,X)$, asymptotic to $(E,-2P)$, solving the adjoint linearized modified constraint system and produced by a variational argument that uses the positive mass inequality for all nearby initial data satisfying the dominant energy condition. Since Theorem~\ref{thm:main} provides an analogous inequality within the class of data with corners satisfying \eqref{eq:corner-condition}, one might expect the equality case with corners to produce lapse--shift pairs $(f_\pm,X_\pm)$ on $\Omega$ and $M\setminus\Omega$ with matching conditions along $\Sigma$ and, under analogous asymptotic hypotheses, that the data arise, in some fashion, from pieces of a spacelike hypersurface in Minkowski space. Carrying this out likely requires extending the Huang--Lee approach to the corner setting, which could be taken up in future work.

\appendix
\section{Conformal change formulas and weighted spaces}\label{sec:appendix}

\subsection{Conformal change formulas}\label{sec:conformal}
We set $c_n = \frac{n-2}{2(n-1)}$. For a conformal change $\bar{g} = u^{4/(n-2)}g$, $\bar{k} = u^{2/(n-2)}k$ with $u > 0$, one has
\begin{align}
        \bar{\mu} &= u^{-\frac{4}{n-2}}\Bigl(\mu - c_n^{-1}\,u^{-1}\Delta_g u\Bigr), \label{eq:conf-mu}\\
        \bar{J} &= u^{-\frac{6}{n-2}}\Bigl(J + c_n^{-1}\,u^{-1}k^\#(\nabla u)\Bigr).
\end{align}
Since $\bar{J}$ is a $(1,0)$-tensor, $|\bar{J}|_{\bar{g}} = u^{2/(n-2)}|\bar{J}|_g$, so
\begin{equation} \label{eq:conf-J}
        |\bar{J}|_{\bar{g}} \le u^{-\frac{4}{n-2}}\Bigl(|J|_g + c_n^{-1}\,u^{-1}|k|_g|\nabla u|_g\Bigr).
\end{equation}
Combining \eqref{eq:conf-mu} and \eqref{eq:conf-J}:
\begin{equation} \label{eq:conf-ineq}
        \bar{\mu} - |\bar{J}|_{\bar{g}} \ge u^{-\frac{4}{n-2}}\Bigl(\mu - |J|_g - c_n^{-1}\,u^{-1}\Delta_g u - c_n^{-1}\,u^{-1}|k|_g|\nabla u|_g\Bigr).
\end{equation}

\subsection{Boundary data under conformal change}\label{sec:conf-bdry}
If $u = 1$ on $\Sigma$, then $\bar\gamma = \gamma$, $\bar\omega = \omega$, and $\bar\tau = \tau$. However, the mean curvature changes:
\begin{equation} \label{eq:H-change}
        \bar{H} = H + c_n^{-1}\,\partial_\nu u,
\end{equation}
where $\nu$ is the chosen unit normal.

\subsection{Weighted spaces}\label{sec:weighted-spaces}

\begin{definition}
Let $B_R\subset\R^n$ be the closed Euclidean ball of radius $R$ centered at the origin, let $\Phi:M\setminus\mathcal K\to \R^n\setminus B_R$ be an asymptotic chart, and write $r=|x|$ on $\R^n\setminus B_R$. For $k\in\mathbb{N}_0$, $\alpha\in(0,1)$, and $q\in\R$, define
\[
\begin{split}
    \|f\|_{C^{k,\alpha}_{-q}(\R^n\setminus B_R)}
    :=\;&\sum_{|I|\le k}\sup_{x\in \R^n\setminus B_R}
        r(x)^{q+|I|}\,|\partial^I f(x)| \\
      &+\sum_{|I|=k}\sup_{\substack{x,y\in \R^n\setminus B_R\\0<|x-y|\le r(x)/2}}
        r(x)^{q+k+\alpha}
        \frac{|\partial^I f(x)-\partial^I f(y)|}{|x-y|^\alpha}.
\end{split}
\]
For $1\le p<\infty$, define
\[
    \|f\|_{W^{k,p}_{-q}(\R^n\setminus B_R)}
    :=\left(
       \int_{\R^n\setminus B_R}\sum_{|I|\le k}
       \bigl|r^{q+|I|}\partial^I f\bigr|^p\,r^{-n}\,dx
       \right)^{1/p}.
\]
On $M$, add the usual unweighted $C^{k,\alpha}$ and $W^{k,p}$ norms in a fixed finite precompact atlas covering $\mathcal K$. The spaces $C^{k,\alpha}_{-q}(M)$ and $W^{k,p}_{-q}(M)$ consist of functions with finite resulting norm, and $L^p_{-q}:=W^{0,p}_{-q}$. Tensor norms are taken componentwise.
\end{definition}


\end{document}